\numberwithin{equation}{section}
\theoremstyle{plain}
\newtheorem{Definition}{Definition}[section]
\newtheorem{Remark}{Remark}[section]
\newtheorem{Theorem}{Theorem}[section]
\newtheorem{Lemma}{Lemma}[section]
\newtheorem{Proposition}{Proposition}[section]
\newtheorem{Corollary}{Corollary}[section]
\newtheorem{Assumption}{Assumption}[section]
\newtheorem{Example}{Example}[section]
\newcommand{\be}{\begin{equation}}
\newcommand{\ee}{\end{equation}}
\newcommand{\bee}{\begin{equation*}}
\newcommand{\eee}{\end{equation*}}
\newcommand{\bi}{\begin{itemize}}
\newcommand{\ei}{\end{itemize}}
\def \E{\mathbb{E}}
\def \N{\mathbb{N}}
\def \P{\mathbb{P}}
\def \R{\mathbb{R}}
\def \X{\mathbb{X}}
\def \Bc{{\mathcal B}}
\def \Ec{{\mathcal E}}
\def \Gc{{\mathcal G}}
\def \eps{\varepsilon}
\newcommand{\T}{\mathcal{T}}
\newcommand{\setword}[2]{%
	\phantomsection
	#1\def\@currentlabel{\unexpanded{#1}}\label{#2}%
}
\title{Stability of Equilibria in Time-inconsistent Stopping Problems}
\author{Erhan Bayraktar\thanks{
		Department of Mathematics, University of Michigan, Ann Arbor, email: \texttt{erhan@umich.edu}. E. Bayraktar is partially supported by the National Science Foundation under grant DMS2106556 and by the Susan M. Smith chair.}	
	\and Zhenhua Wang\thanks{
		Department of Mathematics, University of Michigan, Ann Arbor, email: \texttt{zhenhuaw@umich.edu}. }
	\and Zhou Zhou\thanks{School of Mathematics and Statistics, University of Sydney, Australia, email:
		\texttt{zhou.zhou@sydney.edu.au}.}
}
\begin{document}

\maketitle

\date{}

\begin{abstract}
We investigate the stability of equilibrium-induced optimal values with respect to (w.r.t.) reward functions $f$ and transition kernels $Q$ for time-inconsistent stopping problems under nonexponential discounting in discrete time. First, with locally uniform convergence of $f$ and $Q$ equipped with total variation distance, we show that the optimal  value is semi-continuous w.r.t. $(f,Q)$. We provide examples showing that continuity may fail in general, and the convergence for $Q$ in total variation cannot be replaced by weak convergence.  Next we show that with the uniform convergence of $f$ and $Q$, the optimal value is continuous w.r.t. $(f,Q)$ when we consider a relaxed limit over $\varepsilon$-equilibria. We also provide an example showing that for such continuity the uniform convergence of $(f,Q)$ cannot be replaced by locally uniform convergence.
\end{abstract}	
{\bf Keywords:} Optimal Stopping, Time-inconsistency, Optimal equilibrium, $\eps$-equilibria, Stability
{\bf MSC(2020):}
49K40, 
60G40, 
91A11, 
91A15. 

{
\hypersetup{linkcolor=black}
\tableofcontents
}
	
\section{Introduction}	
Consider the optimal stopping problem 
\be\label{eq.intro.optimalstopping} 
\sup_{\tau\in \T} \E_x[\delta(\tau)f(X_\tau)],
\ee
where $X=(X_t)_{t\in[0,\infty)}$ is a time-homogeneous Markov process taking values in some state space $\X$, $\delta$ is a discount function and $f$ is a reward function. It is well known that when $\delta$ is not exponential, the problem \eqref{eq.intro.optimalstopping} may be time-inconsistent. That is, a stopping strategy that is optimal from today's point of view may no longer be optimal from a future's perspective. A popular approach to address this time-inconsistency is to look for a subgame perfect Nash equilibrium instead of solving \eqref{eq.intro.optimalstopping}: a strategy such that once it is imposed over the planning horizon, the current self has no incentive to deviate from the strategy, given all future selves will follow it.

There have been a lot of papers on equilibrium strategies for time-inconsistent control problems, and we refer to \cite{MR4288523, MR4328502, MR3626618} and the references therein. The development for theory of time-inconsistent stopping is more recent, and we refer to \cite{MR4250561,huang2018time,MR4067078,MR4273542,MR4116459,MR4080735,MR3880244,MR4205889,MR4332966,MR3980261,2022arXiv220107659B,MR3911711}.
Let us also mention the work \cite{MR4397932} which analyzes a time-inconsistent Dynkin game, and \cite{liang2021weak} which considers a time-inconsistent controller-stopper problem. It is worth to mention that most of the papers on time-inconsistent control and stopping focus on the characterization of equilibria. A few exceptions include \cite{MR3911711,MR4116459,MR4250561,MR4121091} where the optimality and selection of equilibria are first analyzed in the presence of multiple equilibria. In particular, it is shown in settings of these papers that there exists an optimal equilibrium which pointwisely dominates all other equilibria in terms of the associated value functions; moreover, this optimal equilibrium is given by the intersection of all equilibria and thus the smallest equilibria.

%

The focus of this paper differs from those in the existing literature on time-inconsistent problems: we consider the stability of (smallest optimal) equilibria as well as the optimal values induced by these equilibria (or by the optimal equilibria). More specifically, we investigate the continuity of the optimal equilibrium and optimal value with respect to (w.r.t.) the reward function $f$ and the transition kernel $Q$ of the Markov process $X$. Our first main result, Theorem \ref{t1}, states that, with the local convergence of $f$ and $Q$ which is equipped with the total variation distance, the optimal equilibria (in terms of inclusion) is lower semicontinuous, and the optimal value function is upper semicontinuous w.r.t. $(f,Q)$. We provide examples showing that the exact continuity w.r.t. $(f,Q)$ for either the optimal equilibrium or the optimal value function may fail. Moreover, we also construct an example in which the semi-continuity fails if the convergence of $Q$ in total variation is changed to weak convergence. Let us emphasize that our first main result contrasts with the stability of the optimal value w.r.t. $(f,Q)$ under time-consistent stopping (i.e., with exponential discounting): the continuity indeed holds for time-consistent stopping in our setup, as indicated in Remark \ref{rm.exponential}.

In our second main result, Theorem \ref{thm.Qf.continuity}, we recover the continuity (under a relaxation) of the optimal value function w.r.t. $(f,Q)$ by relaxing the equilibrium concept and including $\eps$-equilibria: Specifically, we show that as $(f^n,Q^n)$ uniformly converges to $(f,Q)$, it holds that $\lim_{\eps\searrow 0}\lim_{n\to\infty}V_\eps^{Q^n}(\cdot,f^n)=V_0^{Q}(\cdot,f)$, where $V_\eps^{Q^n}(\cdot,f^n)$ is the optimal value induced by all $\eps$-equilibria w.r.t. $(f^n,Q^n)$. The two limits in $\eps$ and $n$ cannot be changed due to the first main result; see Remark \ref{rm.exchange.application}. To prove the second main result, we introduce the notion of pseudo $\eps$-equilibrium which captures the idea of penalizing the possible deviation in the continuation region but not in the stopping region; see Definition \ref{def.equi.pseudoepsi}. It turns out that pseudo $\eps$-equilibria have better properties than $\eps$-equilibria: One can embed the set of pseudo-$\eps$-equilibria to pseudo equilibria corresponding to a perturbed reward function; see Lemma~\ref{lm.equi.epsilon}.
A remarkable observation is that the smallest optimal pseudo equilibrium is actually the smallest optimal equilibrium; see Proposition~\ref{prop.optimalequi.pseudo}. In Example~\ref{ex:countersecond}, we demonstrate that the continuity in our second main result may fail if we replace the uniform convergence of $(f,Q)$ with locally uniform convergence. In Proposition~\ref{prop.continue.pseudo}, however, we show that if the relaxation is over the pseudo $\eps$ equilibria, then the uniform convergence can be loosened.

 
Stability analysis is an important topic in control and optimization problems. For the stability of equilibria, let us mention the very recent works \cite{feinstein2020continuity} and \cite{feinstein2022dynamic} on Nash games. To the best of our knowledge, there is no literature so far studying the stability of equilibria for time-inconsistent (stopping) problems. In this regards, our paper provides very novel and conceptual contributions to the stability analysis in the topic of time-inconsistent problems. Our results also give a theoretical guidance for the numerical computation of optimal equilibrium values for time-inconsistent stopping: with good estimation of the reward function $f$ and transition kernel $Q$, one needs to use $\eps$-equilibria instead of perfect equilibria to estimate the optimal value induced by perfect equilibria.

The rest of the paper is organized as follows. The setup and main assumptions are introduced in Section \ref{sec:set.up}, together with several preliminary lemmata. In Section \ref{sec:upper.continue}, we present our first main result, the proof of which is given in Section \ref{subsec:thm1.proof}. In Section \ref{sec:continue.epsilon}, we provide the second main result by introducing (pseudo) $\varepsilon$-equilibria. The proof of this result is collected in Section \ref{subsec:proof.continue}. Appendix gathers the proofs of lemmata in Section \ref{sec:set.up}.
	
\section{Setup and preliminaries}\label{sec:set.up}	
Consider a measurable space $(\Omega,\mathcal{F})$ and let $X=(X_t)_{t=0,1,\dotso}$ be a time-homogeneous Markov process in discrete time, taking values in some polish space $\X$. Let $\mathbb{F}$ be the filtration generated by $X$. Denote $\mathcal{B}$ the class of Borel sets of $\X$, and $\mathbb{N}:=\{0,1,2,\dotso\}$, $\overline\N:=\N\cup\{\infty\}$, $\R_+:=[0,\infty)$. Let $f:\X\rightarrow \R_+$ be a reward function that may be discontinuous. Denote $||f||_\infty:=\sup_{x\in\X}|f(x)|$. Let $\delta:\mathbb{N}\mapsto[0,1]$ be a discount function that is decreasing with $\delta(0)=1$, $\delta(1)<1$ and $\lim_{t\to\infty}\delta(t)=0$. We further make the following assumption on the discount function $\delta(\cdot)$.

\begin{Assumption}\label{assume.delta}
$\delta(\cdot)$ is log sub-additive, i.e.,
\be\label{eq.assume.logsubadd} 
\delta(t+s)\geq \delta(t)\delta(s),\quad \forall s,t\geq 0.
\ee 
\end{Assumption}
\begin{Remark}
Typical discount functions, including exponential, hyperbolic, generalized hyperbolic and pseudo-exponential discounting, satisfy Assumption \ref{assume.delta}.
\end{Remark}

Given the transition kernel $Q(x,dy)$ for $X$ and a stopping time $\tau$, define 
$$
v^Q(x,\tau,f):=\E_x^Q[\delta(\tau)f(X_\tau)],
$$
where $\E_x^Q$ is the expectation w.r.t. $Q$ given $X_0=x$. For $S\in\Bc$, denote
 $$\rho(S):= \inf\{t\geq 1, X_t\in S\},$$
 and
$$J^Q(x,S,f):=\E^Q_x[\delta(\rho(S))f(X_{\rho(S)})]\cdot 1_{\{x\notin S\}}+f(x)\cdot 1_{\{x\in S\}},\quad \forall x\in \X.$$
We provide the definition of equilibria and optimal equilibria in the following.

\begin{Definition}[Equilibria and optimal equilibria]\label{def.equilibrium}
Fix a reward function $f$ and a transition kernel $Q$. A Borel set $S\subset\X$ is called an equilibrium (w.r.t. $f$ and $Q$) if	
	\be\label{eq.def.equilibrium}  
\begin{cases}
f(x)\leq \E^Q_x[\delta(\rho(S))f(X_{\rho(S)})],\quad \forall x\notin S,\\
 f(x)\geq \E^Q_x[\delta(\rho(S))f(X_{\rho(S)})],\quad \forall x\in S.
\end{cases}
\ee 
Denote $\Ec^Q(f)$ the set of equilibria w.r.t. $f$ and $Q$.
$S\in\Ec^Q(f)$ is called an optimal equilibrium (w.r.t. $f$ and $Q$), if for any $T\in \Ec^Q(f)$,
$$
J^Q(x, S,f)\geq J^Q(x,T,f),\quad \forall x\in \X.
$$
\end{Definition}

Let
\be\label{eq.value.optima} 
V^Q(x,f):=\sup_{S\in \Ec^Q(f)} J^Q(x,S,f),\quad x\in \X,
\ee
which represents the optimal value generated over all equilibria. As indicated by results in \cite{MR3911711} (also see Lemma \ref{lm.iteration.sstar}) there exists an optimal equilibria and thus the supremum for $V^Q(x,f)$ is attained universally at the optimal equilibria for all $x\in\X$.
%
%
In this paper, we investigate the stability of $V^Q(x,f)$ w.r.t. the transition kernel $Q$ and reward function $f$. To begin with, recall the total variation distance between two measures $\mu$ and $\nu$,
$$ ||\mu- \nu||_\text{TV} := \sup_{g\in B(\X;[0,1])} \left \{ \int_\X g \, d\mu - \int_\X g \, d\nu \right \},$$
where $B(\X;[0,1])$ is the set of Borel measurable functions on $\X$ taking values in $[-1,1]$. We will use the following notions of convergence for $f$ and $Q$ for the stability analysis of $V^Q(x,f)$.

\begin{Definition}\label{def.f}
	Let $(f^n)_{n\in \overline \N}$ be a sequence of functions on $\X$. We say $f^n$ converges to $f^\infty$ locally uniformly if for any compact set $K\subset \X$,
$$\lim_{n\to\infty}\sup_{x\in K} |f^n(x)-f^\infty(x)|=0.$$
	Recall that $f^n$ converges to $f^\infty$ uniformly if $\|f^n-f^\infty\|_{\infty}\to 0$ as $n\to\infty$.
\end{Definition}

\begin{Definition}\label{def.variation}
	Let $(Q^n)_{n\in \overline \N}$ be a sequence of transition kernels. We say $Q^n$ converges to $Q^\infty$ locally uniformly in total variation, if for any compact set $K\subset\X$,
	$$\lim_{n\to\infty} \sup_{x\in K}||Q^n(x,\cdot)-Q^\infty(x,\cdot)||_\text{TV}= 0.$$
	We say $Q^n$ converges to $Q^\infty$ uniformly in total variation, if
	$$\lim_{n\to\infty}\sup_{x\in \X}||Q^n(x,\cdot)-Q^\infty(x,\cdot)||_\text{TV}=0.$$
\end{Definition}

\begin{Remark}
	When $\X$ is countable and under the discrete topology, locally uniform convergence of $(Q^n(x,y))_{n\in\overline \N}$ in total variation is the same as the pointwise weak convergence.
	
	When $\X$ is uncountable (e.g., the process under $Q^n$ is a time-discretized diffusion), then the locally uniform convergence of $(Q^n(x,y))_{n\in\overline \N}$ in total variation can be implied by the following condition: There exist a reference measure $\mu$ such that any $Q^n(x,\cdot)$ has a probability density $q^n(x,\cdot)$ w.r.t. $\mu$, i.e., $Q^n(x,dy)=q^n(x,y)\mu(dy)$, and for any compact set $K\subset\X$,
	$$
	\lim_{n\to\infty}\sup_{x\in K}\int_\X|q^n(x,y)-q^\infty(x,y)|d\mu(y)=0.
	$$	
\end{Remark}

Now we present three lemmata that will be used in later sections, and their proofs are collected in Appendix \ref{appendix}.
The first lemma is an analogue of Theorem 2.2 in \cite{MR4205889} for discrete time setting, which provides the existence of an optimal equilibrium, as well as an iterative approach for its construction. To this end, define
\begin{equation}\label{e001}
\quad S^*(f,Q):= \cap_{S\in \Ec^Q(f)} S.
\end{equation}
We have the following.
\begin{Lemma}\label{lm.iteration.sstar}
	Let Assumption \ref{assume.delta} hold. Suppose $f$ is bounded and non-negative, and Q is a transition kernel. Define $S_0=\emptyset$ and for $k=1,2,\dotso$,
	$$S_{k+1}:=S_k\cup\left\{x\in\X\setminus S_k:\ f(x)>\sup_{1\leq\tau\leq\rho(S_k)}v^Q(x,\tau,f)\right\}.$$
	Then $\cup_{k\in \N} S_k=S^*(f,Q)$. Moreover, $S^*(f,Q)$ is an optimal equilibrium, and thus
	$$
	V^Q(x,f)=J^Q(x, S^*(f,Q),f),\quad \forall x\in \X.
	$$
\end{Lemma}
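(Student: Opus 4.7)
Set $S_\infty := \cup_{k\in\N} S_k$ and write $S^* := S^*(f,Q)$. The plan is to proceed in three stages: (i) show by induction on $k$ that $S_k \subseteq S$ for every $S \in \Ec^Q(f)$, so that $S_\infty \subseteq S^*$; (ii) show $S_\infty$ itself belongs to $\Ec^Q(f)$, which, together with (i), yields $S_\infty = S^*$; (iii) show $S^*$ is optimal among all equilibria, giving the identity $V^Q(x,f) = J^Q(x,S^*,f)$.

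For (i), the base $S_0 = \emptyset$ is immediate. Inductively, suppose $S_k \subseteq S$ for every $S \in \Ec^Q(f)$, and pick $x \in S_{k+1}\setminus S_k$ together with any $S \in \Ec^Q(f)$. The inclusion $S_k \subseteq S$ forces $\rho(S) \leq \rho(S_k)$ almost surely, so $\rho(S)$ is admissible in the supremum defining $S_{k+1}$. If $x \notin S$, then the equilibrium inequality for $S$ at $x$ would give
$$
f(x) \leq v^Q(x,\rho(S),f) \leq \sup_{1\leq\tau\leq\rho(S_k)} v^Q(x,\tau,f) < f(x),
$$
a contradiction; hence $x \in S$. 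The same mechanism already supplies half of (ii): for $x \in S_\infty$, pick the smallest $k$ with $x \in S_{k+1}$; since $S_k \subseteq S_\infty$ gives $\rho(S_\infty) \leq \rho(S_k)$, we obtain $f(x) > \sup_{1\leq\tau\leq\rho(S_k)} v^Q(x,\tau,f) \geq v^Q(x,\rho(S_\infty),f)$, which is the stopping-region equilibrium inequality.

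The main obstacle is the continuation-region half of (ii): for $x \notin S_\infty$ one must promote the family of bounds $f(x) \leq \sup_{1\leq\tau\leq\rho(S_k)} v^Q(x,\tau,f)$ (valid for every $k$) to the single inequality $f(x) \leq v^Q(x,\rho(S_\infty),f)$. Two ingredients drive the passage to the limit. First, $\rho(S_k) \downarrow \rho(S_\infty)$ almost surely: on $\{\rho(S_\infty)<\infty\}$ the point $X_{\rho(S_\infty)} \in \cup_k S_k$ lies in some $S_{k_0}$, forcing $\rho(S_k) = \rho(S_\infty)$ for all $k \geq k_0$; combined with boundedness of $f$ and $\delta \leq 1$, dominated convergence gives $v^Q(x,\rho(S_k),f) \to v^Q(x,\rho(S_\infty),f)$. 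Second, Assumption \ref{assume.delta} (log sub-additivity of $\delta$) allows us to decompose $\delta(\tau+\sigma) \geq \delta(\tau)\delta(\sigma)$ along the trajectory; together with the strong Markov property, this bounds $v^Q(x,\tau,f)$ for any admissible $\tau$ by a quantity associated with stopping at $\rho(S_\infty)$, so that the supremum collapses to $v^Q(x,\rho(S_\infty),f)$ in the limit. This is the discrete-time analogue of the argument behind Theorem~2.2 in \cite{MR4205889}.

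Finally, for (iii), fix an arbitrary $S \in \Ec^Q(f)$. Since $S^* \subseteq S$, split into three cases. For $x \in S^*$, both $J^Q$-values equal $f(x)$. For $x \in S \setminus S^*$, the equilibrium condition for $S^*$ at $x$ (from (ii)) yields $J^Q(x,S,f) = f(x) \leq v^Q(x,\rho(S^*),f) = J^Q(x,S^*,f)$. For $x \notin S$, condition on $\Fc_{\rho(S)}$: on $\{X_{\rho(S)} \in S^*\}$ we have $\rho(S^*) = \rho(S)$ and the conditional expectations coincide; on $\{X_{\rho(S)} \in S \setminus S^*\}$, apply log sub-additivity $\delta(\rho(S^*)) \geq \delta(\rho(S))\delta(\rho(S^*)-\rho(S))$ together with the strong Markov property and the equilibrium inequality for $S^*$ at $X_{\rho(S)} \notin S^*$ to deduce $\E^Q_x[\delta(\rho(S^*))f(X_{\rho(S^*)}) \mid \Fc_{\rho(S)}] \geq \delta(\rho(S))f(X_{\rho(S)})$. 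Taking total expectations gives $J^Q(x,S^*,f) \geq J^Q(x,S,f)$, establishing optimality.
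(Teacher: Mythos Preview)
Your proposal is correct and follows essentially the same route as the paper: both defer the hardest step---the continuation-region inequality for $S_\infty$---to the argument of Theorem~2.2 in \cite{MR4205889}, and your direct treatments of (i), the stopping-region half of (ii), and (iii) are valid (and in fact more explicit than the paper's, which simply points to \cite{MR4205889} throughout). One small caveat: your phrase ``the supremum collapses to $v^Q(x,\rho(S_\infty),f)$ in the limit'' does not quite describe how the cited argument runs---it proceeds by contradiction (set $\alpha:=\sup_{y\notin S_\infty}[f(y)-J^Q(y,S_\infty,f)]_+$ and combine your two ingredients to obtain $\alpha\le\delta(1)\alpha<\alpha$), not by a direct limit identity---but since you explicitly invoke \cite{MR4205889}, this is a matter of phrasing rather than a genuine gap.
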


\begin{Remark}
Lemma \ref{lm.iteration.sstar} indicates that there exists a ``smallest'' equilibrium, which is also an optimal one. The supremum for $V^Q(x,f)$ is achieved by the same equilibrium $S^*(f,Q)$. Moreover, If the discount function is exponential, i.e., when the stopping problem \eqref{eq.intro.optimalstopping} is time-consistent, a similar discussion as that in \cite{MR4205889} would show that $S^*(f,Q)$ and $V^Q(x,f)$ would coincide with the optimal stopping region and value respectively in the classical sense.
\end{Remark}

\begin{Lemma}\label{l2}
		Let $(Q^n)_{n\in \overline \N}$ be transition kernels.
	\bi
	\item[(a)] Suppose $Q^n$ converges to $Q^\infty$ locally uniformly in total variation. Then for any $x\in\X$ and $T\in\N$,
	$$\lim_{n\to\infty}\sup_{g\in B(\X^T;[0,1])} \left |\E_x^{Q^n}g(X_1,X_2,\dotso,X_T)-\E_x^{Q^\infty} g(X_1,X_2,\dotso,X_T)\right |=0.$$
	\item[(b)]
	In addition to the condition in part (a), assume that, for any compact set $K$ and $\eps>0$, there exists a compact set $K'$ such that $\sup_{x\in K} Q^\infty(x, K')\geq 1-\varepsilon$.
	Then for any compact set $K$ and $T\in\N$,
	$$\lim_{n\to\infty}\sup_{x\in K, g\in B(\X^T;[0,1])} \left |\E_x^{Q^n}g(X_1,X_2,\dotso,X_T)-\E_x^{Q^\infty} g(X_1,X_2,\dotso,X_T)\right |=0.$$
	\item[(c)] Suppose $Q^n$ converges to $Q^\infty$ uniformly in total variation.
	Then for any $T\in\N$,
	$$\lim_{n\to\infty}\sup_{x\in \X, g\in B(\X^T;[0,1])} \left |\E_x^{Q^n}g(X_1,X_2,\dotso,X_T)-\E_x^{Q^\infty} g(X_1,X_2,\dotso,X_T)\right |=0.$$
	\ei
\end{Lemma}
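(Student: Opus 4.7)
The plan is to proceed by induction on $T$. For the base case $T=1$, the definition of total variation distance gives, for any $g\in B(\X;[0,1])$,
\[
\bigl|\E_x^{Q^n} g(X_1) - \E_x^{Q^\infty} g(X_1)\bigr| \leq \|Q^n(x,\cdot)-Q^\infty(x,\cdot)\|_{\text{TV}},
\]
so (a), (b), (c) at $T=1$ are immediate restatements of the three convergence modes introduced in Definition \ref{def.variation}.

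For the inductive step from $T$ to $T+1$, I would condition on $X_1$. Given $g\in B(\X^{T+1};[0,1])$, set $h^n(x_1):=\E_{x_1}^{Q^n}[g(x_1,X_1,\dots,X_T)]$ and define $h^\infty$ analogously. Using the Markov property and time-homogeneity,
\[
\E_x^{Q^n}g - \E_x^{Q^\infty}g = \int Q^n(x,dx_1)\,[h^n-h^\infty](x_1) + \int [Q^n-Q^\infty](x,dx_1)\,h^\infty(x_1).
\]
The second integral is dominated by $\|Q^n(x,\cdot)-Q^\infty(x,\cdot)\|_{\text{TV}}$, which vanishes pointwise in $x$ (resp.\ uniformly on compact sets, uniformly on $\X$) by assumption. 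The main work is to show the first integral vanishes in the same mode, uniformly in $g$.

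For (a), use $|h^n-h^\infty|\leq 1$ to bound the first integral by
\[
\int Q^\infty(x,dx_1)\,|h^n-h^\infty|(x_1) + \|Q^n(x,\cdot)-Q^\infty(x,\cdot)\|_{\text{TV}}.
\]
By the induction hypothesis, $h^n(x_1)\to h^\infty(x_1)$ for every $x_1$ (uniformly in $g$), so dominated convergence against the probability measure $Q^\infty(x,\cdot)$ kills the remaining integral. Part (c) is immediate, since the induction hypothesis furnishes $\sup_{x_1\in\X}|h^n(x_1)-h^\infty(x_1)|\to 0$, which bounds the first integral uniformly in $x\in\X$.

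The main obstacle is part (b), because the induction hypothesis only supplies uniform convergence of $h^n\to h^\infty$ on compact $x_1$-sets. This is exactly where the additional tightness condition enters: fix $\eps>0$ and choose a compact $K'\subset\X$ with $Q^\infty(x,K')\geq 1-\eps$ uniformly for $x\in K$. Splitting the integral as $\int_{K'}+\int_{(K')^c}$, the first piece is handled by applying the induction hypothesis on the compact set $K'$, while the tail is controlled via $|h^n-h^\infty|\leq 1$ and
\[
Q^n(x,(K')^c)\leq Q^\infty(x,(K')^c)+\|Q^n(x,\cdot)-Q^\infty(x,\cdot)\|_{\text{TV}},
\]
which is uniformly small for $x\in K$ once $n$ is large by the locally uniform convergence of $Q^n\to Q^\infty$ in total variation. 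Sending $n\to\infty$ first and then $\eps\searrow 0$ closes the induction and completes the proof.
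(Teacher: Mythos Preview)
Your proof is correct. The route differs from the paper's, though the underlying ideas are closely related. The paper argues directly for a fixed $T$: it first exploits tightness of the single law $Q^\infty_T(x,\cdot)$ on the Polish space $\X^T$ to find a compact cylinder $(K')^T$ carrying mass $\geq 1-\eps$, then replaces $Q^n$ by $Q^\infty$ one coordinate at a time inside this cylinder via a telescoping estimate, yielding the explicit bound $T\sup_{y\in K'}\|Q^n(y,\cdot)-Q^\infty(y,\cdot)\|_{\text{TV}}$. Parts (b) and (c) are then obtained by making the choice of $K'$ uniform in $x$. Your argument instead proceeds by induction on $T$ with a one-step conditioning decomposition, using dominated convergence for (a), the tightness hypothesis to localize $x_1$ for (b), and the global uniformity directly for (c). Your approach is a bit more modular and avoids having to exhibit the compact cylinder explicitly; the paper's telescoping has the minor advantage of producing a concrete quantitative bound linear in $T$. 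One point worth making explicit in your write-up is that the sup over $g$ can be brought inside the integral because $\sup_g|h^n_g-h^\infty_g|(x_1)$ equals the $T$-step total variation distance at $x_1$, which is measurable on the Polish space $\X$; this is what justifies the dominated convergence step uniformly in $g$.
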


\begin{Remark}
Suppose under $Q^\infty$,
$$X_{t+1}=h(X_t,\xi_t),$$
where $\xi_0,\xi_1,\dotso$ are i.i.d. random variables and $h:\X\times\R^d\mapsto\X$ is continuous. Then the additional assumption in Lemma \ref{l2}(b) is satisfied. Indeed, fix compact set $K\subset\X$ and $\eps>0$. There exists constant $C>0$ such that $\P(|\xi_0|\leq C)\geq 1-\eps$. Let $C':=\sup_{(x,y)\in K\times \overline{B_{C}}}|h(x,y)|<\infty$ and $K':=\overline{B_{C'}}\subset\X$, where $B_r$ is the ball centered at zero with radius $r$. Then $\sup_{x\in K} Q^\infty(x,K')\geq\P(|\xi_0|\leq C)\geq 1-\eps$.
\end{Remark}

%

\begin{Lemma}\label{lm.tau.uniform}
	Let $(Q^n)_{n\in \overline \N}$ be transition kernels, and $(f^n)_{m\in \overline \N}$ be non-negative reward functions such that $\sup_{n\in \overline \N} \|f^n\|_{\infty}<\infty$. Suppose Assumption \ref{assume.delta} holds.
	\bi 
	\item[(a)] Suppose $Q^n$ converges to $Q^\infty$ locally uniformly in total variation and $f^n$ converges to $f^\infty$ locally uniformly. Then
	$$
	\lim_{n\rightarrow\infty} \sup_{\tau\in \T} |v^{Q^n}(x,\tau,f^n)-v^{Q^\infty}(x,\tau,f^\infty)|=0,\quad \forall x\in \X.
	$$
	\item[(b)] In addition to the conditions in part (a), assume that
	for any compact set $K$ and $\eps>0$, there exists a compact set $K'$ such that $\sup_{x\in K} Q^\infty(x, K')\geq 1-\varepsilon$.
	Then for any compact set $K$,
	$$
	\lim_{n\rightarrow\infty} \sup_{x\in K, \tau\in \T} |v^{Q^n}(x,\tau,f^n)-v^{Q^\infty}(x,\tau,f^\infty)|=0.
	$$
	\ei 
	\item[(c)]
	Suppose $Q^n$ converges to $Q^\infty$ uniformly in total variation
	and $\|f^n-f^\infty\|_{\infty}\rightarrow0$. 
	Then
	$$
	\lim_{n\rightarrow\infty} \sup_{x\in \X,\tau\in \T} |v^{Q^n}(x,\tau,f^n)-v^{Q^\infty}(x,\tau,f^\infty)|=0.
	$$
	
\end{Lemma}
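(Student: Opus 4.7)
The plan is to truncate at a finite horizon $T$, which is justified by $\delta(T)\to 0$, and then control the finite-horizon error via Lemma~\ref{l2}. Set $M:=\sup_{n\in\overline\N}\|f^n\|_\infty<\infty$. Since $\delta$ is decreasing and $f^n\geq 0$, the tail satisfies $\E_x^Q[\delta(\tau)f(X_\tau)\mathbf{1}_{\tau>T}]\leq M\delta(T)$ for any kernel $Q$, any reward $f$ with $\|f\|_\infty\leq M$, any stopping time $\tau$, and any $x$. Given $\varepsilon>0$ we fix $T$ with $M\delta(T)<\varepsilon/3$, so that the tail contributions to $|v^{Q^n}(x,\tau,f^n)-v^{Q^\infty}(x,\tau,f^\infty)|$ are at most $2\varepsilon/3$ uniformly in all parameters.

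It remains to handle the finite-horizon difference, which we decompose as $A_n(x,\tau)+B_n(x,\tau)$ with
$$A_n(x,\tau):=\E_x^{Q^n}\!\bigl[\delta(\tau)(f^n-f^\infty)(X_\tau)\mathbf{1}_{\tau\leq T}\bigr], \qquad B_n(x,\tau):=\bigl(\E_x^{Q^n}-\E_x^{Q^\infty}\bigr)\!\bigl[\delta(\tau)f^\infty(X_\tau)\mathbf{1}_{\tau\leq T}\bigr].$$
Since $\tau$ is a stopping time of the natural filtration of $X$, the integrand defining $B_n$ is a $[0,M]$-valued Borel function of $(X_0,\ldots,X_T)$; hence parts (a), (b), (c) of Lemma~\ref{l2} directly bound $\sup_\tau|B_n|$ and deliver the three required forms of uniformity in $x$. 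For $A_n$, the crude estimate $|A_n(x,\tau)|\leq \sum_{k=0}^T \E_x^{Q^n}[|f^n-f^\infty|(X_k)]$ reduces matters to showing that each term on the right tends to zero in the appropriate uniform sense. Adding and subtracting $\E_x^{Q^\infty}[|f^n-f^\infty|(X_k)]$: the kernel discrepancy is dominated by $2M\cdot\sup_{g}|\E_x^{Q^n}g-\E_x^{Q^\infty}g|$, which vanishes by Lemma~\ref{l2}; while $\E_x^{Q^\infty}[|f^n-f^\infty|(X_k)]$ is made small by picking a compact set $K'$ on which $X_k$ concentrates under $Q^\infty$, using locally uniform convergence of $f^n$ on $K'$ together with the trivial bound $2M$ off $K'$.

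The main obstacle is choosing $K'$ appropriately in parts (a) and (b). In (a), $x$ is fixed and the law of $X_k$ under $Q^\infty$ is a Radon measure on the Polish space $\X$, hence automatically tight. In (b), one needs a single $K'$ valid simultaneously for all $x\in K$ and all $0\leq k\leq T$; this is exactly what the added hypothesis on $Q^\infty$ delivers, since iterating it $T$ times with threshold $\varepsilon/T$ at each step yields a compact $K'$ with $\inf_{x\in K}\P_x^{Q^\infty}(X_k\in K',\ 0\leq k\leq T)\geq 1-\varepsilon$. Part (c) requires no tightness argument, since $\|f^n-f^\infty\|_\infty\to 0$ and Lemma~\ref{l2}(c) are themselves uniform in $x\in\X$. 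Assembling these estimates in each case yields the three claimed convergences.
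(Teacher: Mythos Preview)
Your proof is correct and follows essentially the same route as the paper: truncate at a finite horizon $T$ using $\delta(T)\to 0$, control the kernel discrepancy via Lemma~\ref{l2}, and control the reward discrepancy by tightness under $Q^\infty$ plus locally uniform convergence of $f^n$. The only cosmetic difference is the order of the decomposition: the paper switches the kernel first (so the reward change $f^n\to f^\infty$ is taken under $Q^\infty$ directly), whereas you change the reward first under $Q^n$ and then invoke Lemma~\ref{l2} a second time to pass back to $Q^\infty$; this costs you one extra application of Lemma~\ref{l2} but is otherwise equivalent.
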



	
\section{Semi-contintuity of the smallest optimal equilibrium and its associated value}\label{sec:upper.continue}	

In this section, we present the first main result: the semi-continuity of $V^Q(x,f)$ and $S^*(f^\infty, Q^\infty)$ w.r.t. $f$ and $Q$. The proof is collected in Section \ref{subsec:thm1.proof}. Examples for discontinuity are also provided.

\begin{Theorem}\label{t1}
Suppose Assumption \ref{assume.delta} holds. Let $(Q^n)_{n\in \overline \N}$ be transition kernels, and $(f^n)_{n\in \overline \N}$ be non-negative reward functions with $\sup_{n\in \overline \N} \|f^n\|_{\infty}<\infty$. Suppose $Q^n$ converges to $Q^\infty$ locally uniformly in total variation, and $f^n$ converges to $f^\infty$ locally uniformly. Then
\begin{equation}\label{e5}
S^*(f^\infty, Q^\infty) \subset \liminf_{n\rightarrow\infty}S^*(f^n,Q^n),
\end{equation}
and
\begin{equation}\label{e6}
V^{Q^\infty}(x,f^\infty)\geq\limsup_{n\rightarrow\infty}V^{Q^n}(x,f^n),\quad \forall x\in \X. 
\end{equation}
\end{Theorem}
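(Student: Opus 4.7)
The plan is to prove \eqref{e5} by induction on the iterative construction of Lemma \ref{lm.iteration.sstar}, then derive \eqref{e6} from \eqref{e5} using the strong Markov property and the log sub-additivity of $\delta$.

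Denote by $S_k^\infty$ and $S_k^n$ the $k$-th iterates in Lemma \ref{lm.iteration.sstar} associated with $(f^\infty,Q^\infty)$ and $(f^n,Q^n)$, and set $U_k^\infty(x):=\sup_{1\leq\tau\leq\rho(S_k^\infty)}v^{Q^\infty}(x,\tau,f^\infty)$. I will prove by induction on $k$ that $S_k^\infty\subset\liminf_n S_k^n$; the base case is trivial. The enabling observation is that, given the induction hypothesis, $Q^\infty(\rho(S_k^n)>\rho(S_k^\infty))\to 0$: on $\{\rho(S_k^\infty)<\infty\}$, the hitting point $X_{\rho(S_k^\infty)}\in S_k^\infty\subset\liminf_n S_k^n$ lies in $S_k^n$ for $n$ large enough depending on $\omega$, forcing $\rho(S_k^n)\leq\rho(S_k^\infty)$ eventually $Q^\infty$-a.s., after which dominated convergence applies. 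For the inductive step, take $x\in S_{k+1}^\infty\setminus S_k^\infty$, so $f^\infty(x)>U_k^\infty(x)$ with a strict gap $\eps_0>0$, and suppose toward a contradiction that $x\notin S_{k+1}^{n_j}$ along a subsequence. Then $f^{n_j}(x)\leq\sup_{\tau\leq\rho(S_k^{n_j})}v^{Q^{n_j}}(x,\tau,f^{n_j})$, which, combined with Lemma \ref{lm.tau.uniform}(a) (uniform in $\tau$), produces $\tau^{n_j}\leq\rho(S_k^{n_j})$ with $v^{Q^\infty}(x,\tau^{n_j},f^\infty)>U_k^\infty(x)+\eps_0/2$ eventually. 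Truncating $\tau^{n_j}$ at $\rho(S_k^\infty)$ and using $\{\tau^{n_j}>\rho(S_k^\infty)\}\subset\{\rho(S_k^{n_j})>\rho(S_k^\infty)\}$ together with the vanishing probability above gives $v^{Q^\infty}(x,\tau^{n_j},f^\infty)\leq U_k^\infty(x)+\|f^\infty\|_\infty Q^\infty(\rho(S_k^{n_j})>\rho(S_k^\infty))\to U_k^\infty(x)$, a contradiction. Union over $k$ then yields \eqref{e5}.

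For \eqref{e6}, fix $x$ and write $\rho^n:=\rho(S^*(f^n,Q^n))$, $\rho^\infty:=\rho(S^*(f^\infty,Q^\infty))$. If $x\in S^*(f^\infty,Q^\infty)$, \eqref{e5} forces $x\in S^*(f^n,Q^n)$ eventually, so $V^{Q^n}(x,f^n)=f^n(x)\to f^\infty(x)=V^{Q^\infty}(x,f^\infty)$. Otherwise, split on whether $x\in S^*(f^n,Q^n)$ along a subsequence: the easy subcase gives $V^{Q^n}(x,f^n)=f^n(x)\to f^\infty(x)\leq V^{Q^\infty}(x,f^\infty)$ by the equilibrium inequality at $x\notin S^*(f^\infty,Q^\infty)$. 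In the remaining subcase $V^{Q^n}(x,f^n)=v^{Q^n}(x,\rho^n,f^n)$, and Lemma \ref{lm.tau.uniform}(a) reduces the task to showing $\limsup_n v^{Q^\infty}(x,\rho^n,f^\infty)\leq v^{Q^\infty}(x,\rho^\infty,f^\infty)$.

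This last step is the main obstacle. On $\{\rho^n<\rho^\infty\}$ one has $X_{\rho^n}\notin S^*(f^\infty,Q^\infty)$ and $\rho^\infty=\rho^n+\rho^\infty\circ\theta_{\rho^n}$; the strong Markov property, Assumption \ref{assume.delta}, and the equilibrium inequality at $X_{\rho^n}$ then combine to give
$$\E^{Q^\infty}\bigl[\delta(\rho^\infty)f^\infty(X_{\rho^\infty})\bigm|\Fc_{\rho^n}\bigr]\geq\delta(\rho^n)V^{Q^\infty}(X_{\rho^n},f^\infty)\geq\delta(\rho^n)f^\infty(X_{\rho^n})\quad\text{on }\{\rho^n<\rho^\infty\}.$$
Integrating, adding the trivial equality on $\{\rho^n=\rho^\infty\}$, and absorbing the residual via $Q^\infty(\rho^n>\rho^\infty)\to 0$ (another application of \eqref{e5}) produces $v^{Q^\infty}(x,\rho^n,f^\infty)\leq v^{Q^\infty}(x,\rho^\infty,f^\infty)+\|f^\infty\|_\infty Q^\infty(\rho^n>\rho^\infty)$. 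The core difficulty throughout is that neither pointwise set inclusion nor stopping-time-uniform convergence alone is sufficient; one must combine them with Assumption \ref{assume.delta} and the strong Markov property to convert pointwise statements into the required $Q^\infty$-integrated bounds.
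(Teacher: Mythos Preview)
Your proof is correct and follows essentially the same route as the paper: for \eqref{e5} both arguments induct on the iterates $S_k$ of Lemma~\ref{lm.iteration.sstar}, using the induction hypothesis to force $\P_x^{Q^\infty}(\rho(S_k^n)>\rho(S_k^\infty))\to 0$ and then Lemma~\ref{lm.tau.uniform}(a) to pass from $(Q^\infty,f^\infty)$ to $(Q^n,f^n)$; for \eqref{e6} both arguments combine \eqref{e5} with Lemma~\ref{lm.tau.uniform}(a) and a comparison between $\rho^n$ and $\rho^\infty$. The only notable difference is that for the comparison step in \eqref{e6} you carry out the strong Markov / log-sub-additivity computation explicitly on $\{\rho^n<\rho^\infty\}$, whereas the paper packages the same argument into the ``equilibrium dominates any superset'' principle (Lemma~\ref{lm.equi.pseudo}(b), equivalently \cite[Lemma~3.1]{MR4116459}) applied with $R=S^*(f^\infty,Q^\infty)\cup S^*(f^n,Q^n)$.
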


\begin{Remark}\label{rm.sets.semicontinue}
We also have the semi-continuity in terms of the equilibria sets: under the conditions in Theorem \ref{t1}, 
$$
\limsup_{n\to \infty} \Ec^{Q^n}(f^n)\subset  \Ec^{Q^\infty}(f^\infty).
$$
Indeed, for $S\in \limsup_{n\to \infty} \Ec^{Q^n}(f^n)$, there exists a subsequence $(n_k)_k$ such that $S\in \Ec^{Q^{n_k}}(f^{n_k})$, and thus
$$
\begin{cases}
	f^{n_k}(x)\leq \E^{Q^{n_k}}[\delta(\rho(S)f^{n_k}(X_{\rho(S)}))],\quad \forall x\notin S;\\
	f^{n_k}(x)\geq \E^{Q^{n_k}}[\delta(\rho(S)f^{n_k}(X_{\rho(S)}))],\quad \forall x\in S.
\end{cases}
$$
By Lemma \ref{lm.tau.uniform}(a), letting $k\to\infty$ we can conclude that $S\in \Ec^{Q^\infty}(f^\infty)$.
\end{Remark}

\begin{Remark}\label{rm.exponential}
	If $\delta$ is exponential, i.e., $\delta(t+s)=\delta(t)\delta(s)$ for any $s,t\geq 0$, then by a similar discussion as that in \cite{MR4205889}, we have that 
	\be\label{eq.expon0} 
	V^{Q^n}(x,f^n)= \sup_{\tau\in \T} \E^{Q^n}_x[\delta(\tau) f^n(X_\tau)].
	\ee
By Lemma \ref{lm.tau.uniform}(a), \eqref{eq.expon0} implies that
	\be\label{eq.expon} 
	\lim_{n\to\infty} V^{Q^n}(x,f^n)=V^{Q^\infty}(x,f^\infty),\quad \forall x\in \X,
	\ee
	which is the continuity of the optimal value function. 
However, 	we still only have the semi-continuity for the ``smallest" optimal stopping region $S^*(f^n,Q^n)$. 
\end{Remark}

We now present three examples of discontinuity. The first two examples shows that the strict inequalities in \eqref{e5} and \eqref{e6} can happen. Example \ref{eg.upper.finite} is for discontinuity w.r.t. the transition kernel, and Example \ref{eg.upper.finite1} is for discontinuity w.r.t. the reward function. Then we provide a discontinuity example under weak convergence of transition kernels, which indicates that the locally uniform convergence in total variation for transition kernels is the right assumption. 
\begin{Example}\label{eg.upper.finite}
	Let $\X=\{a,b,c\}\subset \R$ with $c<b<a$, $\delta(1)=1/2$ and $\delta(2)=1/3$. Define
	\begin{align*}
		\begin{cases}
			Q^n: Q^n(c,b)=1, \quad Q^n(b,a)=p_n=1-\frac{1}{n}, \quad Q^n(b,b)=\frac{1}{n}, \quad \forall n\in \overline \N,\\
			f(a)=2, \quad  f(b)=1, \quad  f(c)=\frac{1}{2},
		\end{cases}
	\end{align*}
where $\frac{1}{\infty}:=0$, and with a bit of abuse of notation $Q(x,y):=\P(X_1=y|X_0=x)$.  It is easy to check that $Q^n$ converges to $Q^\infty$ uniformly in total variation.
	
	Note that any equilibrium must contain the global maximum of the reward function. By computation 
	$$
	J^{Q^\infty} (b, \{a\},f)=1=f(b)\quad\text{and}\quad J^{Q^\infty}(c,\{a\},f)=2/3>f(c),
	$$
	which imply that $S^*(f,Q^\infty)=\{a\}$. Moreover, since for $n<\infty$,
	$$f(b)>J^{Q^n}(b,\{a\},f)=J^{Q^n}(b,\{a,c\},f),$$
	any equilibrium w.r.t. $Q^n$ for $n<\infty$ must contain $\{a,b\}$.  As $f(c)>\delta(1)f(b)$, $\{a,b\}$ is not  equilibrium w.r.t. $Q^n$ for $n<\infty$. Consequently, $\Ec^{Q^n}(f)=\{\X\}$ for $n<\infty$. Hence, 
	$$
	S^*(f,Q^\infty)=\{c\}\subsetneqq \X=S^*(f,Q^n),\quad \forall n<\infty,$$
	and
	$$V^{Q^n}(c,f)=f(c)<J^{Q^\infty}(c,\{a\})=V^{Q^\infty}(c,f).$$
\end{Example}

\begin{Example}\label{eg.upper.finite1}
	Let $\X=\{a,b,c\}\subset \R$ with $c<b<a$, $\delta(1)=1/2$ and $\delta(2)=1/3$. Define
	\begin{align*}
		\begin{cases}
			Q(c,b)=1, \quad Q(b,a)=1, \quad Q(a,a)=1,\\
			f^n(a)=2, \quad f^n(b)=1+\frac{1}{n},\quad f^n(c)=\frac{1}{2}+(1+\delta(1))\frac{1}{n},\quad \forall n\in \overline \N.
		\end{cases}
	\end{align*}
	Obviously, $\|f^n-f^\infty\|_{\infty}\rightarrow\infty$.
	
	We can compute that
	$$
	J^Q (b, \{a\}, f^\infty)=1=f^\infty(b), \quad J^Q(c,\{a\},f^\infty)=2/3>f^\infty(c),$$
	and thus $\hat{S}^\infty=\{a\}$. Meanwhile,
	$$J^Q(b,\{a\},f^n)=J^Q(b,\{a,c\},f^n)=1<f^n(b),$$
	so neither $\{a\}$ nor $\{a,c\}$ belongs to $\Ec^Q(f^n)$ for $n<\infty$. By $$
	f^n(c)=\frac{1}{2}+(1+\delta(1))\frac{1}{n}>\frac{1}{2}+\delta(1)\frac{1}{n}=\delta(1)f^n(b),
	$$ 
	$\{a,b\}$ is not  equilibrium for all $f^n$ for $n<\infty$. Therefore, $\X$ is the only  equilibrium w.r.t. $f^n$ for $n<\infty$. Hence,
	$$S^*(f^\infty,Q)=\{c\}\subsetneqq \X=S^*(f,Q^n),\quad\forall\,n<\infty,$$
	and
	$$
	\limsup_{n\rightarrow \infty}V^Q(c,f^n)=\limsup_{n\rightarrow \infty}f^n(c)=\frac{1}{2}<\frac{2}{3}=V^Q(c,f^\infty).
	$$
\end{Example}


When $\X$ is finite, convergence locally uniformly in total variation is equivalent to weak convergence. When $\X$ is not finite, we provide below an example showing that the semi-continuity in Theorem \ref{t1} fails when only weak convergence is assumed. Hence, weak convergence is too weak to establish the semi-continuity in Theorem \ref{t1}. 

\begin{Example}\label{eg.weakconvergence}
	Let $\X=\{y, x_\infty,x_1,x_2,...\}\subset \R$, where $0\leq x_n\nearrow x_\infty$ and $y=\dfrac{x_\infty}{\delta(2)}+1$. Let $f(x)=x$. Define for $n<\infty$,
	\begin{align*}
		Q^n:& 
		\begin{cases}
			Q^n(x_i, x_n)=1, \quad \text{for} \quad i\neq n,\\
			Q^n(x_\infty,x_n)=1, Q^n(x_n,y)=1, Q^n(y,y)=1,
		\end{cases}
	\text{and} \quad 
		Q^\infty:&
		\begin{cases} Q^\infty(x_i, x_\infty)=1, \quad \text{for} \quad \forall i,\\
			Q^\infty(x_\infty, x_\infty)=1, Q^\infty(y, y)=1. 
		\end{cases}
	\end{align*}
	It can be shown that $Q^n(z,\cdot)$ weakly converges to $Q^\infty(z,\cdot)$ for any $z\in\X$.
However, since $Q^n(x_1,\{x_\infty\})=0$ for $n<\infty$ while $Q^\infty(x_1,\{x_\infty\})=1$, the locally uniform convergence in total variation fails.
	
	For $n<\infty$, since $y>\dfrac{x_\infty}{\delta(2)}$, we have that
	$$
	\E^{Q^n}_{x_i}[\delta(\rho(\{y\})f(X_{\rho(\{y\}}))]=\begin{cases}
	\delta(2)y, & i\in \overline \N\setminus\{n\}\\
	\delta(1)y,&i=n
	\end{cases}\ >x_\infty\geq x_i.
	$$ 
This implies $S^*(f,Q^n)=\{y\}$ for $n<\infty$. On the other hand, denote
$$S_1:=\left\{x\in\X:\ f(x)>\sup_{1\leq\tau}v^{Q^\infty}(x,\tau,f)\right\}.$$
Obviously, $\{x,y\}\subset S_1$. By Lemma \ref{lm.iteration.sstar}, we have that $\{x,y\}\subset S^*(f,Q^\infty)$. Hence,
	$$
	 \limsup_{n\rightarrow \infty} S^*(f,Q^n)\subsetneqq S^*(f,Q^\infty) \quad \text{and}\quad V^{Q^\infty}(x_\infty,f)=x_\infty<\liminf_{n\to \infty}V^{Q^\infty}(x_\infty,f)=\delta(2)y.
	$$ 
\end{Example}

\subsection{Proof of Theorem \ref{t1}}\label{subsec:thm1.proof}

\begin{proof}[{\bf \textit{Proof of Theorem \ref{t1}}}]
	For $n\in\overline\N$, define $S_0^n=\emptyset$ and
	\be\label{eq.thm.0} 
	S_{k+1}^{n}:=S_k^n\cup\left\{x\in\X\setminus S_k^{n}:\ f(x)>\sup_{1\leq\tau\leq\rho(S_k^{n})}v^{Q^n}(x,\tau,f^n)\right\}.
	\ee
	By Lemma \ref{lm.iteration.sstar}, $S^*(f^n,Q^n)=\cup_k S_k^{n}=\lim_{k\to\infty}S_k^{n}$, $\forall n\in \overline \N$. We show by induction that
	\begin{equation}\label{e3}
		S_k^{\infty}\subset \liminf_{n\rightarrow\infty} S_k^{n},\quad k=0,1,\dotso,
	\end{equation}
	which in particular implies that $S^*(f^\infty,Q^\infty)\subset\liminf_{n\to\infty}S^*(f^n,Q^n)$.
	
	Obviously, \eqref{e3} holds for $k=0$. Suppose it holds for $k=i$ and consider the case $k=i+1$. Take $x\in S_{i+1}^{\infty}$. If $x\in S_i^{\infty}$, then by induction hypothesis $$
	x\in\liminf_{n\to\infty} S_{i}^{n}\subset \liminf_{n\to\infty} S_{i+1}^{n}.
	$$
	Now assume $x\notin S_i^{\infty}$. Then
	\begin{equation}\label{e4}
		\alpha:=f^\infty(x)-\sup_{1\leq\tau\leq\rho(S_{i}^{\infty})}v^{Q^\infty}(x,\tau,f^\infty)>0.
	\end{equation}
	Denote the probability measure $\P^n$ induced by $Q^n$. By induction hypothesis,
	$$\rho({S_i^{\infty}})\geq\rho\left(\underset{1\leq n<\infty}{\cup}\left(\underset{n\leq j<\infty,}{\cap}S_i^{j}\right)\right)=\lim_{n\to\infty}\rho\left(\underset{n\leq j<\infty}{\cap}S_i^{j}\right),\quad\P_x^\infty-\text{a.s.}.$$
	Therefore, there exists $N\in\N$ such that for any $n\geq N$,
\be\label{e7}
		\P_x^{Q^\infty}\left[\rho(S_i^{n})>\rho({S_i^{\infty}})\right]\leq\P_x^{Q^\infty}\left[\rho\left(\underset{n\leq j<\infty}{\cap}S_i^{j}\right)>\rho({S_i^{\infty}})\right]<\frac{\alpha}{2M},
\ee 
where $M:=\sup_{n\in \overline\N} \|f^n\|_{\infty}<\infty$.
	Then for any $\tau'$ with $1\leq\tau'\leq\rho(S_i^{n})$, we have that
	$$v^{Q^\infty}(x,\tau',f^\infty)\leq v^{Q^\infty}(x,\tau'\wedge \rho(S_i^{\infty}),f^\infty)+\frac{\alpha}{2}\leq \sup_{1\leq\tau\leq\rho(S_i^{\infty})}v^{Q^\infty}(x,\tau,f^\infty)+\frac{\alpha}{2},$$
	and thus
	$$\sup_{1\leq\tau\leq\rho(S_i^{n})}v^{Q^\infty}(x,\tau,f^\infty)\leq  \sup_{1\leq\tau\leq\rho(S_i^{\infty})}v^{Q^\infty}(x,\tau,f^\infty)+\frac{\alpha}{2},\quad\forall\,n\geq N.$$
	This together with \eqref{e4} implies that
	\be\label{eq.1.thm} 
	f^\infty(x)-\sup_{1\leq\tau\leq\rho(S_{i}^{n})}v^{Q^\infty}(x,\tau,f^\infty)\geq\frac{\alpha}{2}>0.\ee
	By Lemma \ref{lm.tau.uniform} part (a), for $n$ large enough, we have that
	\be\label{eq.2.thm} 
	\begin{aligned}
	\left|\sup_{1\leq\tau\leq\rho(S_{i}^{n})}v^{Q^\infty}(x,\tau,f^\infty)-\sup_{1\leq\tau\leq\rho(S_{i}^{n})}v^{Q^n}(x,\tau,f^n)\right|\leq & \sup_{1\leq\tau\leq\rho(S_{i}^{n})}\left|v^{Q^\infty}(x,\tau,f^\infty)-v^{Q^n}(x,\tau,f^n)\right| \\
	\leq & \sup_{\tau\in\T}\left|v^{Q^\infty}(x,\tau,f^\infty)-v^{Q^n}(x,\tau,f^n)\right|<\frac{\alpha}{3}.
	\end{aligned}
\ee
Meanwhile, we can choose $N'$ such that for all $n\geq N'$ \eqref{eq.2.thm} holds and 
\be\label{eq.3.thm}  
|f^n(x)-f^\infty(x)|\leq \frac{\alpha}{12}.
\ee 
Thus, for all $n\geq \max\{N, N'\}$, combine \eqref{eq.1.thm}, \eqref{eq.2.thm} and \eqref{eq.3.thm},
\begin{align*}
f^n(x)-	\sup_{1\leq\tau\leq\rho(S_{i}^{n})}v^{Q^n}(x,\tau,f^n)=& f^n(x)-f^\infty(x)+f^\infty(x)-\sup_{1\leq\tau\leq\rho(S_{i}^{n})}v^{Q^\infty}(x,\tau,f^\infty)\\
&+\sup_{1\leq\tau\leq\rho(S_{i}^{n})}v^{Q^\infty}(x,\tau,f^\infty)-\sup_{1\leq\tau\leq\rho(S_{i}^{n})}v^{Q^n}(x,\tau,f^n)\\
\geq&-\frac{\alpha}{12}+\frac{\alpha}{2}-\frac{\alpha}{3}>0.
\end{align*}	
	Consequently, for $n$ large enough, no matter $x$ is in $S_i^{n}$ or not, we always have $x\in S_{i+1}^{n}$, and thus $x\in\liminf_{n\to\infty} S_{i+1}^{n}$. By the arbitrariness of $x$, \eqref{e3} holds for $k=i+1$. We have proved \eqref{e5}.
	
	Now let $\eps>0$ and $x\notin S^*(f^\infty,Q^\infty)$. Following the argument in \eqref{e7}, we can show that there exists $N\in\N$ such that for any $n>N$,
	\begin{equation}\label{e8}
		\P_x^{Q^\infty}\left[\rho(S^*(f^n,Q^n))>\rho({S^*(f^\infty,Q^\infty)})\right]<\frac{\eps}{2M}.
	\end{equation}
	Then there exists $N'>N$ such that for any $n>N'$,
	\begin{align*}
	v^{Q^\infty}(x,\rho(S^*(f^\infty,Q^\infty)))\geq &v^{Q^\infty}(x,\rho(S^*(f^\infty,Q^\infty)\cup S^*(f^n,Q^n)))\geq v^{Q^\infty}(x,\rho(S^*(f^n,Q^n)))-\frac{\eps}{2}\\
	\geq & v^{Q^n}(x,\rho(S^*(f^n,Q^n)))-\eps,
	\end{align*}
	where the first inequality follows from \cite[Lemma 3.1]{MR4116459} (or Lemma \ref{lm.equi.pseudo}), the second inequality follows from \eqref{e8}, the third inequality follows from Lemma \ref{lm.tau.uniform} part (a). As a result,
	$$v^{Q^\infty}(x,\rho(S^*(f^\infty,Q^\infty)))\geq \limsup_{n\to\infty}v^{Q^\infty}(x,\rho(S^*(f^n,Q^n)))-\eps.$$
	By the arbitrariness of $\eps$, we have \eqref{e6} holds.
\end{proof}	
	
\section{Continuity under a relaxed limit}\label{sec:continue.epsilon}


As shown in the previous section, $V^Q(x,f)$ is not continuous w.r.t. $Q$ or $f$ in general. To achieve the stability, we need to relax the equilibrium set over which we take supremum. 
	
\begin{Definition}\label{def.equi.epsi}
	Fix a reward function $f$ and a transition kernel $Q$. Take $\varepsilon\geq 0$. A Borel set $S$ is called an $\varepsilon$-equilibrium (w.r.t. $f$ and $Q$), if 
	\be \label{eq.def.epsequi}
\begin{cases}
	f(x)\leq \E^Q_x[\delta(\rho(S))f(X_{\rho(S)})]+\eps,\quad \forall x\notin S,\\
	f(x)+\eps\geq \E^Q_x[\delta(\rho(S))f(X_{\rho(S)})],\quad \forall x\in S.
\end{cases}
	\ee 
	Define
	$$
	\Ec^Q(f,\varepsilon):=\{\text{$S$ is an $\varepsilon$-equilibrium w.r.t. }f \text{ and }Q\}.
	$$
	When $\varepsilon=0$, we still call $S$ an equilibrium and may use the notation $\Ec^Q(f)$ instead of $\Ec^Q(f,0)$.
\end{Definition}	

We also need the following notion of pseudo $\varepsilon$-equilibria, which loosens the criterion of $\eps$-equilibrium by giving up the condition in \eqref{eq.def.epsequi} when $x\in S$.

\begin{Definition}\label{def.equi.pseudoepsi}
Fix a reward function $f$ and a transition kernel $Q$. Take $\varepsilon\geq 0$. A Borel set $S\subset\X$ is called a pseudo $\varepsilon$-equilibrium (w.r.t. $f$ and $Q$), if 
	\be\label{eq.defem.outS}  
 f(x)\leq \E^Q_x[\delta(\rho(S))f(X_{\rho(S)})]+\varepsilon,\quad \forall x\notin S.
\ee 
Define
$$
\Gc^Q(f,\varepsilon):=\{\text{$S$ is a pseudo $\varepsilon$-equilibrium w.r.t. $f$ and $Q$}\}.
$$
When $\varepsilon=0$, we simply call $S$ is a pseudo  equilibrium, and write $\Gc^Q(f)$ short for $\Gc^Q(f,0)$. We say $S\in \Gc^Q(f)$ is an optimal pseudo equilibrium (w.r.t. $f$ and $Q$), if for any $T\in \Gc^Q(f)$,
$$
J(x,S,f)\geq J(x,T,f),\quad \forall x\in \X.
$$
\end{Definition}	
%

Now define
\be\label{eq.def.WVepsilon} 
W^Q_\varepsilon(x,f):=\sup_{S\in \Gc^Q(f,\varepsilon)} J^Q(x,S,f);\quad V^Q_\varepsilon(x,f):=\sup_{S\in \Ec^Q(f,\varepsilon)} J^Q(x,S,f).
\ee
When $\varepsilon=0$ we write $W^Q(x,f)$ instead of $W^Q_0(x,f)$, and we keep using the notation $V^Q(x,f)$ in \eqref{eq.value.optima} instead of $V^Q_0(x,f)$. 

Pseudo $\eps$-equilibria have better properties than $\eps$-equilibria. As we will see in Lemma~\ref{lm.equi.epsilon} below one can embed the set of pseudo-$\eps$-equilibria to pseudo equilibria corresponding to a perturbed reward function. We will also observe that the smallest optimal pseudo equilibrium is actually the smallest optimal equilibrium in Proposition~\ref{prop.optimalequi.pseudo}. These two results form the backbone of the proof of the second main result which we state below. The proof of this result is provided in Section \ref{subsec:proof.continue}.

\begin{Theorem}\label{thm.Qf.continuity}
	Suppose Assumption \ref{assume.delta} holds. Let $(Q^n)_{n\in \overline \N}$ be transition kernels, and $(f^n)_{n\in \overline \N}$ be bounded and non-negative reward functions. 
	Suppose $Q^n$ converges to $Q^\infty$ uniformly in total variation, and $\|f^n-f^\infty\|_{\infty}\rightarrow0$. Then
	\begin{align*}
		&\lim\limits_{\varepsilon\searrow 0}\Big( \liminf_{n\rightarrow \infty} V^{Q^n}_\varepsilon(x,f^n)\Big)=  \lim\limits_{\varepsilon\searrow 0}\Big( \liminf_{n\rightarrow \infty} W^{Q^n}_\varepsilon(x,f^n)\Big)\\
		=&\lim\limits_{\varepsilon\searrow 0}\Big( \limsup_{n\rightarrow \infty}V^{Q^n}_\varepsilon(x,f^n)\Big)=  \lim\limits_{\varepsilon\searrow 0}\Big( \limsup_{n\rightarrow \infty} W^{Q^n}_\varepsilon(x,f^n)\Big)\\
		=&V^{Q^\infty}(x,f^\infty),\quad \forall x\in \X.
	\end{align*}	
\end{Theorem}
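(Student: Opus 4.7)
The plan is to sandwich all four joint limits between $V^{Q^\infty}(x,f^\infty)$ from both sides. Since every $\varepsilon$-equilibrium is automatically a pseudo $\varepsilon$-equilibrium, $\Ec^{Q^n}(f^n,\eps)\subset\Gc^{Q^n}(f^n,\eps)$, which gives the pointwise domination $V^{Q^n}_\eps\leq W^{Q^n}_\eps$. Together with the trivial $\liminf_n\leq\limsup_n$, the four expressions in the theorem are ordered, so it suffices to prove the lower bound $\lim_{\eps\searrow 0}\liminf_n V^{Q^n}_\eps(x,f^n)\geq V^{Q^\infty}(x,f^\infty)$ and the upper bound $\lim_{\eps\searrow 0}\limsup_n W^{Q^n}_\eps(x,f^n)\leq V^{Q^\infty}(x,f^\infty)$; a squeeze then identifies all four limits with $V^{Q^\infty}(x,f^\infty)$.

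For the lower bound, I would test against the fixed set $S^*:=S^*(f^\infty,Q^\infty)$ provided by Lemma \ref{lm.iteration.sstar}. Because $S^*$ already satisfies the equilibrium inequalities \eqref{eq.def.equilibrium} for $(f^\infty,Q^\infty)$ with slack $0$, the uniform bounds $\|f^n-f^\infty\|_\infty\to 0$ and $\sup_{x,\tau}|v^{Q^n}(x,\tau,f^n)-v^{Q^\infty}(x,\tau,f^\infty)|\to 0$ from Lemma \ref{lm.tau.uniform}(c) turn these into $\eps$-equilibrium inequalities for $(f^n,Q^n)$; hence, for every fixed $\eps>0$, $S^*\in\Ec^{Q^n}(f^n,\eps)$ once $n$ is large enough. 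Then $V^{Q^n}_\eps(x,f^n)\geq J^{Q^n}(x,S^*,f^n)$, and a further application of Lemma \ref{lm.tau.uniform}(c) yields $J^{Q^n}(x,S^*,f^n)\to J^{Q^\infty}(x,S^*,f^\infty)=V^{Q^\infty}(x,f^\infty)$.

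For the upper bound, my first step is to transport pseudo $\eps$-equilibria from problem $n$ back to the limit problem. Writing $\alpha_n:=\|f^n-f^\infty\|_\infty+\sup_{x,\tau}|v^{Q^n}(x,\tau,f^n)-v^{Q^\infty}(x,\tau,f^\infty)|$, which vanishes by Lemma \ref{lm.tau.uniform}(c), a direct triangle-inequality manipulation of \eqref{eq.defem.outS} shows that each $S\in\Gc^{Q^n}(f^n,\eps)$ lies in $\Gc^{Q^\infty}(f^\infty,\eps+\alpha_n)$, and simultaneously $|J^{Q^n}(x,S,f^n)-J^{Q^\infty}(x,S,f^\infty)|\leq\alpha_n$ uniformly in $S$. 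Taking the supremum over $S$ and then $\limsup_n$ gives $\limsup_n W^{Q^n}_\eps(x,f^n)\leq W^{Q^\infty}_{\eps'}(x,f^\infty)$ for every $\eps'>\eps$, which reduces (U) to the one-problem equality $\lim_{\eps\searrow 0}W^{Q^\infty}_\eps(x,f^\infty)=V^{Q^\infty}(x,f^\infty)$.

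The main obstacle is this last equality. The direction ``$\geq$'' is immediate from monotonicity of $W^{Q^\infty}_\eps$ in $\eps$ together with Proposition \ref{prop.optimalequi.pseudo}, which identifies $W^{Q^\infty}(x,f^\infty)$ with $V^{Q^\infty}(x,f^\infty)$. The direction ``$\leq$'' cannot be obtained by naive continuity of $f\mapsto V^Q(x,f)$, since Example \ref{eg.upper.finite1} shows that map is only upper semicontinuous in general. The right tool is Lemma \ref{lm.equi.epsilon}, which embeds each pseudo $\eps$-equilibrium $S$ for $(f^\infty,Q^\infty)$ into a genuine pseudo equilibrium for some perturbed reward $\tilde f$ agreeing with $f^\infty$ on $S$ and satisfying $\|\tilde f-f^\infty\|_\infty=O(\eps)$. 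Since $J^{Q^\infty}(x,S,\cdot)$ depends on the reward only through its values on $S$, we obtain $J^{Q^\infty}(x,S,f^\infty)=J^{Q^\infty}(x,S,\tilde f)\leq W^{Q^\infty}(x,\tilde f)=V^{Q^\infty}(x,\tilde f)$ by Proposition \ref{prop.optimalequi.pseudo}; the remaining delicate step is to check that the specific family of perturbations supplied by Lemma \ref{lm.equi.epsilon} forces $V^{Q^\infty}(x,\tilde f)\to V^{Q^\infty}(x,f^\infty)$ as $\eps\searrow 0$, so that the joint $\eps$- and $n$-bookkeeping collapses to the desired bound.
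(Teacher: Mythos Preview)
Your plan is essentially the paper's: the same sandwich, the same lower bound via the fixed set $S^*(f^\infty,Q^\infty)$, and the same three ingredients for the upper bound (Lemma \ref{lm.equi.epsilon}, Proposition \ref{prop.optimalequi.pseudo}, and the one-sided continuity $V^{Q}(x,(f-\eps)\vee 0)\to V^Q(x,f)$, which is precisely Lemma \ref{lm.optimal.pseudo}). Two remarks are worth making.

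First, you misread Lemma \ref{lm.equi.epsilon}: the perturbation it supplies is the \emph{single, $S$-independent} function $\tilde f=\bigl(f-\tfrac{\eps}{1-\delta(1)}\bigr)\vee 0$, which does \emph{not} agree with $f^\infty$ on $S$. Your equality $J^{Q^\infty}(x,S,f^\infty)=J^{Q^\infty}(x,S,\tilde f)$ is therefore false; what you actually get is $J^{Q^\infty}(x,S,f^\infty)\leq J^{Q^\infty}(x,S,\tilde f)+\tfrac{\eps}{1-\delta(1)}$. This is exactly the inequality the paper uses in \eqref{eq1}, and the extra additive term is harmless. (If instead you intended the $S$-dependent perturbation $\tilde f_S:=f\cdot 1_S+((f-\eps)\vee 0)\cdot 1_{S^c}$, then your equality holds but the last step collapses: you would need $\sup_S V^{Q^\infty}(x,\tilde f_S)\to V^{Q^\infty}(x,f^\infty)$, and there is no monotonicity of $f\mapsto V^Q(x,f)$ available to control this uniformly in $S$.)

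Second, the order of operations differs slightly. You transport $\Gc^{Q^n}(f^n,\eps)$ into $\Gc^{Q^\infty}(f^\infty,\eps')$ first (using Lemma \ref{lm.tau.uniform}(c)), and then apply Lemma \ref{lm.equi.epsilon} at the limit problem. The paper instead applies Lemma \ref{lm.equi.epsilon} at level $n$ to get $W^{Q^n}_\eps(x,f^n)\leq V^{Q^n}\bigl(x,(f^n-c)\vee 0\bigr)+c$, and then invokes Theorem \ref{t1} to pass these \emph{values} to the limit. Both routes are valid under the hypotheses of the theorem; the paper's route has the minor advantage that its upper bound step actually uses only the locally uniform convergence of Theorem \ref{t1}, whereas your transport of pseudo $\eps$-equilibria genuinely needs the uniform convergence.
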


Letting $f^n=f$ and $Q^n=Q$ for $n\in \overline \N$ in Theorem \ref{thm.Qf.continuity}, we achieve the following corollary, which shows that $V^Q(x,f)$ is indeed the limit of the supremum value over all $\varepsilon$-equilibria as $\varepsilon\searrow 0$.

\begin{Corollary}\label{cor}
	Suppose Assumption \ref{assume.delta} holds. Given a bounded reward function $f\geq 0$ and a transition kernel $Q$, we have that
	$$
	\lim_{\varepsilon\searrow 0}V^Q_\varepsilon(x,f)=\lim_{\varepsilon\searrow 0}W^Q_\varepsilon(x,f)=V^Q(x,f),\quad \forall x\in \X.
	$$
\end{Corollary}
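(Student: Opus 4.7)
The plan is to obtain the corollary as an immediate specialization of Theorem \ref{thm.Qf.continuity} by taking the constant sequences $f^n \equiv f$ and $Q^n \equiv Q$ for every $n \in \overline{\N}$. This reduces the double-limit statement (limit in $\varepsilon$ of liminf/limsup in $n$) to a single limit in $\varepsilon$, which is exactly what the corollary asserts.

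First, I would verify that the hypotheses of Theorem \ref{thm.Qf.continuity} hold for these constant sequences. Uniform convergence $\|f^n-f^\infty\|_\infty\to 0$ and uniform convergence of $Q^n$ to $Q$ in total variation are both trivial since $f^n-f^\infty\equiv 0$ and $Q^n-Q^\infty\equiv 0$. Moreover, $f=f^\infty$ is bounded and non-negative and Assumption \ref{assume.delta} is assumed. Thus Theorem \ref{thm.Qf.continuity} applies with $Q^\infty=Q$ and $f^\infty=f$.

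Next, since the sequences defining $V^{Q^n}_\varepsilon(x,f^n)$ and $W^{Q^n}_\varepsilon(x,f^n)$ are constant in $n$, for every fixed $\varepsilon\geq 0$ one has
$$
\liminf_{n\to\infty} V^{Q^n}_\varepsilon(x,f^n)=\limsup_{n\to\infty} V^{Q^n}_\varepsilon(x,f^n)=V^Q_\varepsilon(x,f),
$$
and the analogous identity for $W^{Q^n}_\varepsilon$. Plugging these into the chain of equalities in Theorem \ref{thm.Qf.continuity} yields
$$
\lim_{\varepsilon\searrow 0}V^Q_\varepsilon(x,f)=\lim_{\varepsilon\searrow 0}W^Q_\varepsilon(x,f)=V^{Q}(x,f),
$$
which is exactly the claim of the corollary.

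Finally, I would remark (to justify that the limits in $\varepsilon$ actually exist) that $\Ec^Q(f,\varepsilon)$ and $\Gc^Q(f,\varepsilon)$ are increasing in $\varepsilon$, so $\varepsilon\mapsto V^Q_\varepsilon(x,f)$ and $\varepsilon\mapsto W^Q_\varepsilon(x,f)$ are non-decreasing and bounded below by $V^Q(x,f)$; hence their limits as $\varepsilon\searrow 0$ exist. There is essentially no obstacle here: the content of the corollary lies entirely in Theorem \ref{thm.Qf.continuity}, and the only work is the observation that the constant-sequence specialization kills the liminf/limsup.
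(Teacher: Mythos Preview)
Your proposal is correct and matches the paper's own argument exactly: the corollary is obtained by specializing Theorem \ref{thm.Qf.continuity} to the constant sequences $f^n\equiv f$ and $Q^n\equiv Q$, which collapses the $\liminf$/$\limsup$ in $n$ to a single value. Your added remark on monotonicity in $\varepsilon$ justifying existence of the limits is a nice touch, though not strictly required given that Theorem \ref{thm.Qf.continuity} already asserts the outer limits exist and coincide.
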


\begin{Remark}\label{rm.exchange.application}
	Combining Theorem \ref{t1} and Corollary \ref{cor}, we have
	$$
	\limsup\limits_{n\rightarrow \infty}\Big( \lim_{\varepsilon\searrow 0}V^{Q^n}_\varepsilon(x,f^n)\Big)=\limsup\limits_{n\rightarrow \infty} V^{Q^n}(x,f^n)\leq V^{Q^\infty}(x,f^\infty),\quad \forall x\in\X.
	$$
	Recall that the strict inequality above can be achieved as shown in Examples \ref{eg.upper.finite} and \ref{eg.upper.finite1}. Hence, together with Theorem \ref{thm.Qf.continuity}, we see that
	the order of taking $\varepsilon\searrow 0$ and taking $n\rightarrow \infty$ cannot be exchanged. 
	
	Moreover, the main results in this paper provide a guideline for numerical approximation for $V^{Q^\infty}(x,f^\infty)$: With good approximations of the transition kernel $Q^\infty$ and reward function $f^\infty$, taking supremum only over equilibria may not provide good estimation for the target optimal value. Instead, one should take supremum over all $\varepsilon$-equilibria.
\end{Remark}

\begin{Remark}
	Analogous to Remark \ref{rm.sets.semicontinue}, if the same conditions in Theorem \ref{thm.Qf.continuity} hold, then
	$$
	\lim_{\eps\searrow 0}\left(\liminf_{n\to \infty} \Ec^{Q^n}_\eps (f^n)\right)=\lim_{\eps\searrow 0}\left(\limsup_{n\to \infty} \Ec^{Q^n}_\eps (f^n)\right)=\Ec^{Q^\infty} (f^\infty).
	$$
	\begin{proof}
		By a similar argument as in Remark \ref{rm.sets.semicontinue}, we can show that
		$$
		\lim_{\eps\searrow 0}\left(\limsup_{n\to \infty} \Ec^{Q^n}_\eps (f^n)\right)\subset\Ec^{Q^\infty} (f^\infty).
		$$
		It remains to show that 
		\be\label{eq.sets} 
		\Ec^{Q^\infty} (f^\infty)\subset \lim_{\eps\searrow 0}\left(\liminf_{n\to \infty} \Ec^{Q^n}_\eps (f^n)\right).
		\ee 
		For $S\in \Ec^{Q^\infty} (f^\infty)$, we have
		$$
		\begin{cases}
			f^{\infty}(x)\leq \E^{Q^{\infty}}[\delta(\rho(S)f^{\infty}(X_{\rho(S)}))],\quad \forall x\notin S;\\
			f^{\infty}(x)\geq \E^{Q^{\infty}}[\delta(\rho(S)f^{\infty}(X_{\rho(S)}))],\quad \forall x\in S.
		\end{cases}
		$$
		Then for any $\varepsilon>0$, Lemma \ref{lm.tau.uniform} implies that, for $n$ big enough,
		$$
		\begin{cases}
			f^{n}(x)-\varepsilon\leq \E^{Q^{n}}[\delta(\rho(S)f^{n}(X_{\rho(S)}))],\quad \forall x\notin S;\\
			f^{n}(x)+\varepsilon\geq \E^{Q^{n}}[\delta(\rho(S)f^{n}(X_{\rho(S)}))],\quad \forall x\in S.
		\end{cases}.
		$$
		Consequently, $S\in\liminf_{n\to \infty} \Ec^{Q^n}_\eps (f^n)$ for any $\eps>0$, which implies \eqref{eq.sets}.
	\end{proof}
\end{Remark}

The following example shows that the continuity result in Theorem \ref{thm.Qf.continuity} may fail if the convergence of $(Q_n)_{n\in \N}$ in total variation is only assumed to be locally uniform instead of uniform.

\begin{Example}\label{ex:countersecond}
	Let $\X=\{y, x_0, x_1,x_2,...\}\subset \R$.  Define
	\begin{align*}
		&Q^n:
		\begin{cases}
			Q^n(x_i, x_{i+1})=\frac{1}{2}, Q^n(x_i,y)=\frac{1}{2},\quad & 0\leq i< n,\\
			Q^n(x_i,y)=1,\quad &  i> n\\
			Q^n(x_n,x_n)=1, Q^n(y,y)=1.
		\end{cases};\\
	&Q^\infty:
	\begin{cases} 
		Q^\infty(x_i, x_{i+1})=\frac{1}{2}, Q^\infty(x_i, y)=\frac{1}{2}, \quad \forall i\geq 0,\\
		Q^\infty(y, y)=1. 
	\end{cases}
	\end{align*}
One can easily see that $Q^n$ converges to $Q^\infty$ locally uniformly, but not uniformly. Let $f(x_i)=1$ for $i\in\N$, $f(y)=2.99$, and $\delta(k)=\frac{1}{1+k}$ for $k\in\N$. 

We have 
	$\frac{1}{2}\delta(1)(1+f(y))=\frac{3.99}{4}<1$, and 
	$$
	\sum_{k=1}^\infty \delta(k)\left(\frac{1}{2}\right)^k f(y)>\sum_{k=1}^3 \delta(k)\left(\frac{1}{2}\right)^k f(y)=2.99\left(\frac{1}{4}+\frac{1}{12}+\frac{1}{32}\right)>1.
	$$ 
	That is,
	\be\label{eq.eg.M}
	\frac{1}{2}\delta(1)(1+f(y))<1<\sum_{k=1}^\infty \delta(k)\left(\frac{1}{2}\right)^k f(y).
	\ee
Take $\eps$ with $0<\varepsilon<1-\frac{1}{2}\delta(1)(1+f(y))$. For any $n<\infty$ and $S\in \Ec^{Q^n}(f,\varepsilon)$, it is easy to check that $y,x_n\in S$. For any $i\leq n$, if $x_i\in S$, then by the first inequality in \eqref{eq.eg.M}, $x_{i-1}\in S$. Hence, for any $n<\infty$,
$$
\{x_0,x_1,...,x_n\}\subset S,\quad \forall S\in \Ec^{Q^n}(f,\varepsilon).
$$
As the above holds for any $\eps$ with $0<\varepsilon<1-\frac{1}{2}\delta(1)(1+f(y))$, we have that
$$
\limsup_{n\rightarrow \infty}V_\varepsilon^n(x_0)=f(x_0),\quad \forall n<\infty, 
$$
which leads to 
\be\label{eq.eg.x0}
\limsup_{\varepsilon\searrow 0}\limsup_{n\rightarrow \infty}V_\varepsilon^n(x_0)=f(x_0).
\ee
On the other hand, the second inequality in \eqref{eq.eg.M} indicates $J^\infty(x_i, \{y\})>f(x_i)$ for any $i\in\N$. This together with $J^\infty(y, \{y\})<f(y)$ implies that 
$$\hat{S}^\infty=\{y\}\quad\text{and}\quad V^\infty (x_0)=J^\infty(x_0,\{y\})=\sum_{k=1}^\infty \delta(k)\left(\frac{1}{2}\right)^k f(y).$$
Then by \eqref{eq.eg.x0} and the second inequality in \eqref{eq.eg.M},
$$
\limsup_{\varepsilon\searrow 0}\limsup_{n\rightarrow \infty}V_\varepsilon^n(x_0)<V^\infty(x_0).
$$
\end{Example}

However, if we use $W^{Q^n}_\varepsilon(.,f^n)$ (instead of $V^{Q^n}_\varepsilon(.,f^n)$) to approximate $V^{Q^\infty}(.,f^\infty)$, then we can weaken the uniform convergence in total variation condition to locally uniform convergence as shown in the following proposition.

\begin{Proposition}\label{prop.continue.pseudo}
	Suppose the conditions for $(f^n)_{n\in \overline \N}$ and $\delta$ in Theorem \ref{thm.Qf.continuity} hold, and $Q^n$ converges to $Q^\infty$ locally uniformly in total variation. Assume that for any compact set $K$ and $\eps>0$, there exists a compact set $K'$ such that $\sup_{x\in K} Q^\infty(x, K')\geq 1-\varepsilon$.
	Then
	\begin{align*}
		& \lim\limits_{\varepsilon\searrow 0}\Big( \liminf_{n\rightarrow \infty} W^{Q^n}_\varepsilon(x,f^n)\Big)
		= \lim\limits_{\varepsilon\searrow 0}\Big( \limsup_{n\rightarrow \infty} W^{Q^n}_\varepsilon(x,f^n)\Big)
		=V^{Q^\infty}(x,f^\infty),\quad \forall x\in \X.
	\end{align*}
\end{Proposition}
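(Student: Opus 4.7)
I expect the proof to follow the blueprint of Theorem~\ref{thm.Qf.continuity}, establishing the lower bound $\liminf_{n}W^{Q^n}_\eps(x,f^n)\geq V^{Q^\infty}(x,f^\infty)-o_\eps(1)$ and the matching upper bound $\limsup_{n}W^{Q^n}_\eps(x,f^n)\leq V^{Q^\infty}(x,f^\infty)+o_\eps(1)$, then sending $\eps\searrow 0$. The identity $V^{Q^\infty}(x,f^\infty)=W^{Q^\infty}_0(x,f^\infty)$ from Proposition~\ref{prop.optimalequi.pseudo} is what makes the pseudo equilibrium the natural limiting object. The novelty compared with Theorem~\ref{thm.Qf.continuity} is that the uniform-in-total-variation ingredient Lemma~\ref{lm.tau.uniform}(c) is unavailable and must be replaced by its locally uniform counterpart Lemma~\ref{lm.tau.uniform}(b) combined with the tightness hypothesis on $Q^\infty$.

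\textbf{Lower bound.} Set $S^\star:=S^*(f^\infty,Q^\infty)$, which by Lemma~\ref{lm.iteration.sstar} and Proposition~\ref{prop.optimalequi.pseudo} attains $V^{Q^\infty}(x,f^\infty)=J^{Q^\infty}(x,S^\star,f^\infty)$. Fix $\eta>0$, write $M:=\sup_n\|f^n\|_\infty$, and choose $T$ with $\delta(T)M<\eta$; iterating the tightness hypothesis $T$ times produces a compact $K\ni x$ such that $\P^{Q^\infty}_y[X_t\in K,\ 1\leq t\leq T]\geq 1-\eta$ for every $y\in K$. The candidate pseudo-$\eps$-equilibrium for $(f^n,Q^n)$ will be $S_\eta:=S^\star\cup(\X\setminus K)$, whose complement $K\setminus S^\star$ is compact. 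To verify $S_\eta\in\Gc^{Q^n}(f^n,\eps)$ I plan to combine the equilibrium inequality satisfied by $S^\star$ at $(f^\infty,Q^\infty)$ with: the tail estimate $\delta(T)M<\eta$; the tightness control on the exit event $\{\tau_{\X\setminus K}<\rho(S^\star)\}$; Lemma~\ref{l2}(b) to transfer this control from $Q^\infty$ to $Q^n$; Lemma~\ref{lm.tau.uniform}(b) to transfer the stopped-value estimate uniformly over $y\in K\setminus S^\star$; and finally $\|f^n-f^\infty\|_\infty\to 0$. The same ingredients give $J^{Q^n}(x,S_\eta,f^n)\geq V^{Q^\infty}(x,f^\infty)-O(\eta)$ for $n$ large, which yields the lower bound upon choosing $\eta=\eta(\eps)$ small enough.

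\textbf{Upper bound.} For any $S\in\Gc^{Q^n}(f^n,\eps)$, Lemma~\ref{lm.equi.epsilon} embeds $S$ as a pseudo equilibrium for a perturbed reward $\tilde f^n$ with $\|\tilde f^n-f^n\|_\infty=O(\eps)$, so Proposition~\ref{prop.optimalequi.pseudo} gives $J^{Q^n}(x,S,f^n)\leq V^{Q^n}(x,\tilde f^n)+O(\eps)$. Since $(\tilde f^n,Q^n)$ converges to $(f^\infty,Q^\infty)$ up to an $O(\eps)$ shift in the reward (compatible with the locally uniform hypotheses), Theorem~\ref{t1} gives $\limsup_n V^{Q^n}(x,\tilde f^n)\leq V^{Q^\infty}(x,f^\infty)+O(\eps)$, which is the desired upper bound.

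\textbf{Main obstacle.} The delicate step is the verification of the pseudo-$\eps$-equilibrium inequality for $S_\eta$ uniformly in $y\in K\setminus S^\star$. Without uniform convergence of $Q^n$, the kernels $Q^n(y,\cdot)$ and $Q^\infty(y,\cdot)$ need not be close for $y$ outside compacts, so the direct argument of Theorem~\ref{thm.Qf.continuity} breaks down. Placing $\X\setminus K$ inside the candidate stopping set $S_\eta$ confines the inequality to be checked to the compact $K\setminus S^\star$, exactly where Lemma~\ref{lm.tau.uniform}(b) can operate. This trick works only for pseudo equilibria: a genuine $\eps$-equilibrium would additionally demand the opposite inequality on the huge set $\X\setminus K$, which is precisely the obstruction illustrated by Example~\ref{ex:countersecond}.
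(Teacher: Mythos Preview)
Your proposal is correct and follows essentially the same route as the paper: the paper's Step~1 constructs exactly your set $S_\eta=S^\star\cup(\X\setminus K)$ (denoted $S_x$ there), verifies it lies in $\Gc^{Q^n}(f^n,\eps)$ via Lemma~\ref{lm.tau.uniform}(b) on the compact complement $K\setminus S^\star$, and compares its value with $J^{Q^n}(x,S^\star,f^n)$ using the tail and tightness bounds; Step~2 simply invokes Step~2 of the proof of Theorem~\ref{thm.Qf.continuity}, which is your upper-bound argument verbatim. One small imprecision in your upper bound: Theorem~\ref{t1} delivers $\limsup_n V^{Q^n}(x,\tilde f^n)\leq V^{Q^\infty}\bigl(x,(f^\infty-\tfrac{\eps}{1-\delta(1)})\vee 0\bigr)$ rather than $V^{Q^\infty}(x,f^\infty)+O(\eps)$ directly, and the passage to $V^{Q^\infty}(x,f^\infty)$ as $\eps\searrow 0$ requires Lemma~\ref{lm.optimal.pseudo} (equation~\eqref{eq.continuef.value}).
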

The proof of Proposition \ref{prop.continue.pseudo} is presented in Section \ref{subsec:proof.continue}

\subsection{Proofs of Theorem \ref{thm.Qf.continuity} and Proposition \ref{prop.continue.pseudo}}\label{subsec:proof.continue}

To prepare for the proofs of Theorem \ref{thm.Qf.continuity} and Proposition \ref{prop.continue.pseudo}, we first provide some auxiliary results for (pseudo) $\varepsilon$-equilibria.

\begin{Lemma}\label{lm.equi.pseduvs}
	Fix a bounded reward function $f$ and a transition kernel $Q$. We have that
	$$
	\Ec^Q(f,\varepsilon)\subset \Gc^Q(f,\varepsilon),\quad \forall \varepsilon\geq 0,
	$$
	and 
	$$
	V^Q_\varepsilon(x,f)\leq W^Q_\varepsilon(x,f),\quad \forall x\in \X,\forall \varepsilon\geq 0.
	$$
\end{Lemma}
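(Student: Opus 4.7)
The plan is to verify both statements directly from the definitions; this is essentially an immediate consequence of the fact that pseudo $\varepsilon$-equilibria drop one of the two conditions required for $\varepsilon$-equilibria.

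For the first inclusion, I would take an arbitrary $S\in\Ec^Q(f,\varepsilon)$ and check that it satisfies the defining condition \eqref{eq.defem.outS} of $\Gc^Q(f,\varepsilon)$. Comparing Definition \ref{def.equi.epsi} with Definition \ref{def.equi.pseudoepsi}, the first line of the system in \eqref{eq.def.epsequi} is exactly \eqref{eq.defem.outS}, so the inclusion is immediate — no work is needed beyond quoting the definitions. In particular, pseudo $\varepsilon$-equilibria are obtained from $\varepsilon$-equilibria by discarding the constraint on $x\in S$.

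For the inequality between value functions, I would simply invoke monotonicity of the supremum: since $\Ec^Q(f,\varepsilon)\subset \Gc^Q(f,\varepsilon)$, the supremum of the map $S\mapsto J^Q(x,S,f)$ over the smaller family $\Ec^Q(f,\varepsilon)$ cannot exceed its supremum over $\Gc^Q(f,\varepsilon)$. That is, for any $x\in\X$ and $\varepsilon\geq 0$,
\[
V^Q_\varepsilon(x,f) \;=\; \sup_{S\in \Ec^Q(f,\varepsilon)} J^Q(x,S,f) \;\leq\; \sup_{S\in \Gc^Q(f,\varepsilon)} J^Q(x,S,f) \;=\; W^Q_\varepsilon(x,f),
\]
which is the desired inequality.

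There is no genuine obstacle here: both assertions are purely definitional, and no use of Assumption \ref{assume.delta}, boundedness of $f$, or any continuity/convergence hypothesis is required. The role of this lemma is simply to set up the comparison that will be exploited in the proof of Theorem \ref{thm.Qf.continuity}, where working with the larger family $\Gc^Q(f,\varepsilon)$ (which enjoys the embedding into pseudo equilibria of a perturbed reward function alluded to before Theorem \ref{thm.Qf.continuity}) is more convenient, while the inequality guarantees that bounds obtained for $W^Q_\varepsilon$ automatically transfer to $V^Q_\varepsilon$.
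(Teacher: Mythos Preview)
Your proposal is correct and matches the paper's own proof, which simply states that the result follows directly from Definitions \ref{def.equi.epsi} and \ref{def.equi.pseudoepsi}. Your write-up just makes the definitional comparison and the monotonicity of the supremum explicit, which is exactly the content of the paper's one-line argument.
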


\begin{proof}
 The result directly follows from Definitions \ref{def.equi.epsi} and \ref{def.equi.pseudoepsi}.
\end{proof}


\begin{Lemma}\label{lm.equi.pseudo}
	Let Assumption \ref{assume.delta} hold. Let $f\geq 0$ be a bounded reward function and $Q$ be a transition kernel. 
	\bi 
	\item[(a)]
	Given $S,T\in \Gc^Q(f)$, we have that
	$
	S\cap T\in \Gc^Q(f).
	$
	\item[(b)]
	Let $S,R\in\Bc$ such that $S\in \Gc^Q(f)$ and $R\supset S$. Then
	$$
	J^Q(x,S,f)\geq J^Q(x,R,f),\quad \forall x\in \X.
	$$
	\ei
\end{Lemma}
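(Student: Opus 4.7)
The plan is to prove (a) and (b) separately. Part (b) is a one-step strong Markov argument; part (a) requires iterating.

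For (a), fix $x\notin S\cap T$; the goal is to show $f(x)\leq \E^Q_x[\delta(\rho(S\cap T)) f(X_{\rho(S\cap T)})]$. I would construct an increasing sequence of stopping times $\tau_1\leq\tau_2\leq\cdots$ that alternately enters $S$ and $T$. Without loss of generality $x\notin S$; set $\tau_1:=\rho(S)$. Inductively, if $X_{\tau_k}\in S\cap T$ freeze $\tau_{k+1}:=\tau_k$; otherwise $X_{\tau_k}$ lies in exactly one of $S,T$, and I let $\tau_{k+1}$ be the first time strictly after $\tau_k$ at which $X$ enters the other set. Starting from the pseudo equilibrium bound $f(x)\leq \E^Q_x[\delta(\tau_1) f(X_{\tau_1})]$, I chain step by step: on $\{X_{\tau_k}\notin S\cap T\}$, the pseudo equilibrium condition for whichever of $S,T$ misses $X_{\tau_k}$, combined with the strong Markov property at $\tau_k$, gives $f(X_{\tau_k})\leq \E^Q[\delta(\tau_{k+1}-\tau_k) f(X_{\tau_{k+1}})\mid \Fc_{\tau_k}]$; log sub-additivity of $\delta$ together with $f\geq 0$ then promotes $\delta(\tau_k)\delta(\tau_{k+1}-\tau_k)$ to $\delta(\tau_{k+1})$, so that $f(x)\leq \E^Q_x[\delta(\tau_k) f(X_{\tau_k})]$ for every $k$. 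Letting $k\to\infty$: on $\{\rho(S\cap T)<\infty\}$ the construction forces $\tau_k=\rho(S\cap T)$ eventually, while on the complement the $\tau_k$ strictly increase, hence diverge, and $\delta(\tau_k) f(X_{\tau_k})\to 0$ by boundedness of $f$ and $\delta(t)\to 0$. Dominated convergence yields the desired inequality.

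For (b), three cases. If $x\in S\subset R$ both sides equal $f(x)$. If $x\in R\setminus S$ then $J^Q(x,R,f)=f(x)\leq \E^Q_x[\delta(\rho(S)) f(X_{\rho(S)})]=J^Q(x,S,f)$ by the pseudo equilibrium condition for $S$. If $x\notin R$ (hence $x\notin S$), the containment $R\supset S$ gives $\rho(R)\leq\rho(S)$ pathwise. At $X_{\rho(R)}\in R$, either $X_{\rho(R)}\in S$, in which case $\rho(S)=\rho(R)$ and the two discounted integrands agree pathwise, or $X_{\rho(R)}\notin S$, in which case the pseudo equilibrium condition for $S$ at $X_{\rho(R)}$, the strong Markov property, log sub-additivity, and $f\geq 0$ combine to yield $\delta(\rho(R)) f(X_{\rho(R)})\leq \E^Q[\delta(\rho(S)) f(X_{\rho(S)})\mid \Fc_{\rho(R)}]$. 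Taking outer expectations gives $J^Q(x,R,f)\leq J^Q(x,S,f)$.

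The main obstacle is the tail behaviour in (a) on $\{\rho(S\cap T)=\infty\}$; boundedness of $f$ and the decay $\delta(t)\to 0$ make the boundary term vanish, while log sub-additivity of $\delta$ is the algebraic engine that lets the alternating discount exponents compose cleanly across steps.
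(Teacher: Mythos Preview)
Your argument is correct and is precisely the alternating–stopping-time construction that underlies the results the paper cites (\cite[Lemma 4.1]{MR3911711} for (a) and \cite[Lemma 3.1]{MR4116459} for (b)); the paper simply defers to those references rather than writing out the proof. One small remark: for (a) the paper records the slightly stronger conclusion $J^Q(x,S\cap T,f)\geq J^Q(x,S,f)\vee J^Q(x,T,f)$, which your chain gives as well if you start from $\E^Q_x[\delta(\tau_1)f(X_{\tau_1})]$ rather than from $f(x)$; and your claim that $\tau_k=\rho(S\cap T)$ eventually on $\{\rho(S\cap T)<\infty\}$ is justified by the easy induction $\tau_k\leq\rho(S\cap T)$ (each $\tau_{k+1}$ is a first entry into a set containing $S\cap T$), which forces the bounded integer sequence to stabilize at a time where $X\in S\cap T$.
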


\begin{proof}
	Part (a):
	We can use the same argument as that in the proof of \cite[lemma 4.1]{MR3911711} to get that
	$$
	J(x,S\cap T)\geq J(x, S)\vee J(x,T)\geq f(x), \quad\forall\, x\notin S\cap T,
	$$
	which implies $S\cap T\in \Gc^Q(f)$. 
	
	Part (b): Notice that $J^Q(x,S,f)=f(x)=J^Q(x,R,f)$, for all $x\in S$. For $x\notin S$, same discussion in the proof of \cite[Lemma 3.1]{MR4116459} (or \cite[Lemma 4.1]{MR4250561} ) can be applied to reach that
	$$
	J^Q(x, S,f)\geq J^Q(x,R,f).
	$$
\end{proof}
Define
$$S_*(f,Q):=\cap_{s\in \Gc^Q(f)} S.$$
Recall the smallest optimal equilibrium, $S^*(f,Q)=\cap_{s\in \Ec^Q(f)} S$ defined in \eqref{e001}. The following proposition shows that $S_*(f,Q)$ is optimal among all pseudo equilibria and also coincides with $S^*(f,Q)$.

\begin{Proposition}\label{prop.optimalequi.pseudo}
	Let Assumption \ref{assume.delta} hold. Given a bounded reward function $f\geq 0$ and a transition kernel $Q$, we have that
	$$S_*(f,Q)=S^*(f,Q)\quad\text{and}\quad W^Q(x,f)=J^Q(x,S_*(f,Q),f)=V^Q(x,f),\ \forall x\in \X.
	$$
\end{Proposition}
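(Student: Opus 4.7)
The plan is to prove $S_*(f,Q) = S^*(f,Q)$ first, and then deduce the value function equalities by combining this set equality with Lemma \ref{lm.equi.pseudo}(b) and Lemma \ref{lm.iteration.sstar}.

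The easy inclusion $S_*(f,Q) \subset S^*(f,Q)$ follows immediately from comparing Definitions \ref{def.equilibrium} and \ref{def.equi.pseudoepsi}: every equilibrium is a pseudo equilibrium since \eqref{eq.defem.outS} is exactly the first line of \eqref{eq.def.equilibrium}. Hence $\mathcal{E}^Q(f) \subset \mathcal{G}^Q(f)$, and intersecting over the larger family yields a smaller set.

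For the reverse inclusion $S^*(f,Q) \subset S_*(f,Q)$, the strategy is to fix an arbitrary pseudo equilibrium $S \in \mathcal{G}^Q(f)$ and show by induction on $k$ that $S_k \subset S$, where $(S_k)$ is the iterative sequence from Lemma \ref{lm.iteration.sstar}. The base case $S_0 = \emptyset$ is trivial. For the inductive step, assume $S_k \subset S$ and take $x \in S_{k+1} \setminus S_k$, so by construction
$$
f(x) > \sup_{1 \leq \tau \leq \rho(S_k)} v^Q(x, \tau, f).
$$
The key observation is that $S_k \subset S$ forces the pathwise inequality $\rho(S) \leq \rho(S_k)$, since any trajectory hitting $S_k$ has already entered $S$. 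In particular $\rho(S)$ lies in the admissible range $[1, \rho(S_k)]$. If $x \notin S$, the pseudo equilibrium inequality \eqref{eq.defem.outS} would give
$$
f(x) \leq v^Q(x, \rho(S), f) \leq \sup_{1 \leq \tau \leq \rho(S_k)} v^Q(x, \tau, f),
$$
contradicting the strict inequality above. Hence $x \in S$, completing the induction. Taking $k \to \infty$ and invoking Lemma \ref{lm.iteration.sstar}, $S^*(f,Q) = \cup_k S_k \subset S$; since $S$ was an arbitrary pseudo equilibrium, $S^*(f,Q) \subset S_*(f,Q)$.

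Once $S_*(f,Q) = S^*(f,Q)$ is in hand, Lemma \ref{lm.iteration.sstar} tells us this common set is an equilibrium (and hence a pseudo equilibrium) satisfying $V^Q(x,f) = J^Q(x, S^*(f,Q), f)$. For any $S \in \mathcal{G}^Q(f)$ we have $S^*(f,Q) \subset S$, so Lemma \ref{lm.equi.pseudo}(b) applied with the smaller set $S^*(f,Q)$ and the larger set $S$ gives $J^Q(x, S^*(f,Q), f) \geq J^Q(x, S, f)$ for every $x \in \mathbb{X}$. Taking supremum over $S \in \mathcal{G}^Q(f)$ yields $W^Q(x,f) = J^Q(x, S^*(f,Q), f) = V^Q(x,f)$, as desired.

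The only nontrivial step is the induction in the reverse inclusion; its crux is the pathwise monotonicity $\rho(S_k) \geq \rho(S)$ under $S_k \subset S$, which places $\rho(S)$ in the supremum range and allows the pseudo equilibrium hypothesis on $S$ to clash with the strict inequality defining $S_{k+1}$. Everything else is essentially bookkeeping via the lemmata already established.
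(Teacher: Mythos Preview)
Your proposal is correct and follows essentially the same strategy as the paper: prove $S^*(f,Q)\subset S$ for every pseudo equilibrium $S$ by induction along the iterative sequence $(S_k)$ of Lemma~\ref{lm.iteration.sstar}, then deduce the value identities via Lemma~\ref{lm.equi.pseudo}(b). The only cosmetic difference is that the paper runs the induction contrapositively (showing $x\notin S \Rightarrow x\notin S_{k+1}$) and phrases the supremum defining $S_{k+1}$ over sets rather than stopping times, whereas you work directly with the stopping-time formulation stated in Lemma~\ref{lm.iteration.sstar}; the underlying idea is identical.
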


\begin{proof}
	By Lemma \ref{lm.equi.pseduvs}, $\Ec^Q(f)\subset \Gc^Q(f)$ and thus $S_*(f,Q)\subset S^*(f,Q)$. We show $S^*(f,Q)\subset S_*(f,Q)$ by the iterative construction for $S^*(f,Q)$. Recall $S^*(f,Q)=\cup_{n\in \N} S_n$ in Lemma \ref{lm.iteration.sstar}, where $(S_n)_{n\in \N}$ is an increasing sequence defined as $S_0=\emptyset$, and
	$$
	S_{n+1}:=\{x\in \X\setminus S_n: f(x)>\sup_{S:S_n\subset S\subset \X\setminus\{x\}} J^Q(x,S,f) \}, \quad n\in\N.
	$$ 
	For any $R\in \Gc^Q(f)$, we prove by induction that
	\begin{equation}\label{e002}
	S_n\subset R,\quad\forall\,n\in \N.
	\end{equation}
We have $S_0=\emptyset\subset R$. Suppose $S_n\subset R$, then for any $x\notin R$, 
	$$
	f(x)\leq J^Q(x,R,f)\leq \sup_{S:S_n\subset S\subset \X\setminus\{x\}} J^Q(x,S,f), 
	$$
	and thus $x\notin S_{n+1}$. Therefore, $S_{n+1}\subset R$. 
	
By \eqref{e002}, $S^*(f,Q)=\cup_{n\geq 0}S_n\subset R$ for any $R\in \Gc^Q(f)$, which implies $S^*(f,Q)\subset S_*(f,Q)$. Hence, $S_*(f,Q)=S^*(f,Q)$. Moroever, for any $S\in \Gc^Q(f)$, by Lemma \ref{lm.equi.pseudo} part (b), 
	$$
	J^Q(x,S_*(f,Q),f)\geq J^Q(x,S,f),\quad \forall x\in \X,
	$$
	so $J^Q(.,S_*(f,Q),f)=W^Q(.,f)$. Together with Lemma \ref{lm.iteration.sstar}, we have that
	$$
	W^Q(x,f)=J^Q(x,S_*(f,Q),f)=J^Q(x,S^*(f,Q),f)=V^Q(x,f),\quad \forall x\in \X.
	$$
\end{proof}

\begin{Lemma}\label{lm.f.epsilon}
	Suppose Assumption \ref{assume.delta} holds. For any $0\leq \varepsilon_1\leq \varepsilon_2$, we have that
	\be\label{eq.fepsilon.equiset}  
	\Gc^Q((f-\varepsilon_1)\vee 0)\subset \Gc^Q((f-\varepsilon_2)\vee 0).
	\ee
	Therefore,
	\be\label{eq.fepsilon.optimalequi}    
	S_*((f-\varepsilon_1)\vee 0,Q)\supseteq S_*((f-\varepsilon_2)\vee 0,Q).
	\ee 
\end{Lemma}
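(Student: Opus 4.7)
The plan is to prove the inclusion \eqref{eq.fepsilon.equiset} directly from the definition of a pseudo equilibrium, and then obtain \eqref{eq.fepsilon.optimalequi} as a purely set-theoretic consequence. Recall that $S\in\Gc^Q((f-\varepsilon)\vee 0)$ means
$$(f(x)-\varepsilon)\vee 0\ \leq\ \E^Q_x\bigl[\delta(\rho(S))\,((f(X_{\rho(S)})-\varepsilon)\vee 0)\bigr]\quad\text{for all } x\notin S.$$
The workhorse is the elementary pointwise bound: for $y\geq 0$ and $0\leq\varepsilon_1\leq\varepsilon_2$,
$$0\ \leq\ (y-\varepsilon_1)\vee 0\,-\,(y-\varepsilon_2)\vee 0\ \leq\ \varepsilon_2-\varepsilon_1,$$
which is verified by a short three-case check ($y\geq\varepsilon_2$, $\varepsilon_1\leq y<\varepsilon_2$, $y<\varepsilon_1$). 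Informally, shifting the reward down by a bit more only decreases the positive part by at most that extra amount.

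Fix $S\in \Gc^Q((f-\varepsilon_1)\vee 0)$ and $x\notin S$. First I would apply the defining inequality at level $\varepsilon_1$, then invoke the elementary bound above inside the expectation with $y=f(X_{\rho(S)})$ (which is $\geq 0$ because $f\geq 0$). Since $\delta\leq 1$ gives $\E^Q_x[\delta(\rho(S))]\leq 1$, this yields
$$(f(x)-\varepsilon_1)\vee 0\ \leq\ \E^Q_x\bigl[\delta(\rho(S))\,((f(X_{\rho(S)})-\varepsilon_2)\vee 0)\bigr]+(\varepsilon_2-\varepsilon_1).$$
Now I would split on the value of $f(x)$: if $f(x)\leq\varepsilon_2$, the desired inequality is trivial since the left-hand side vanishes and the right-hand side is non-negative; if $f(x)>\varepsilon_2\geq\varepsilon_1$, both positive parts unfold to plain differences and rearranging subtracts exactly $\varepsilon_2-\varepsilon_1$ from both sides, giving the pseudo equilibrium inequality at level $\varepsilon_2$. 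This establishes \eqref{eq.fepsilon.equiset}.

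For \eqref{eq.fepsilon.optimalequi}, I would simply note that $S_*(g,Q)=\cap_{R\in\Gc^Q(g)}R$ is anti-monotone in the family $\Gc^Q(g)$: a larger family of pseudo equilibria produces a smaller intersection. Since \eqref{eq.fepsilon.equiset} says the family at $\varepsilon_2$ contains that at $\varepsilon_1$, intersecting the $\varepsilon_2$-family over fewer-or-equal constraints gives $S_*((f-\varepsilon_2)\vee 0,Q)\subseteq S_*((f-\varepsilon_1)\vee 0,Q)$. There is no real obstacle here; the only subtlety is keeping the max-with-zero under control, which the case split handles cleanly, and remembering that $f\geq 0$ is needed in order to apply the elementary bound to $y=f(X_{\rho(S)})$.
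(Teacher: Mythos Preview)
Your proof is correct and follows essentially the same route as the paper: the elementary bound $0\le (y-\varepsilon_1)\vee 0-(y-\varepsilon_2)\vee 0\le \varepsilon_2-\varepsilon_1$ inside the expectation, followed by the same case split on whether $f(x)\ge\varepsilon_2$, is exactly what the paper does. One wording slip: in deducing \eqref{eq.fepsilon.optimalequi} you say the $\varepsilon_2$-intersection is over ``fewer-or-equal constraints,'' but in fact the $\varepsilon_2$-family is \emph{larger}, so the intersection is over \emph{more} sets and hence smaller---your conclusion is correct, only the phrasing is inverted.
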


\begin{proof}
	Let $S\in \Gc^Q(f-\varepsilon_1)$. For any $x\notin S$,
	\begin{align*}
		&|J^Q(x,S,(f-\varepsilon_1)\vee 0)-J^Q(x,S,(f-\varepsilon_2)\vee 0)| \\
		= & \E^x\left[\delta(\rho(S)) \left(\left(\left(f\left(X_{\rho(S)}\right)-\varepsilon_1\right)\vee 0\right)-\left(\left(f\left(X_{\rho(S)}\right)-\varepsilon_2\right)\vee 0\right)\right)\right]\\
		\leq & \E^x[\delta(\rho(S))(\varepsilon_2-\varepsilon_1)]
		\leq \varepsilon_2-\varepsilon_1.
	\end{align*}
	If $f(x)\geq \varepsilon_2$, then
	\begin{align*}
		&J^Q(x,S,(f-\varepsilon_2)\vee 0)\geq J^Q(x,S,(f-\varepsilon_1)\vee 0)-(\varepsilon_2-\varepsilon_1)\\
		\geq &f(x)-\varepsilon_1-(\varepsilon_2-\varepsilon_1)=f(x)-\varepsilon_2,
	\end{align*}
	where the second inequality follows that $S\in \Gc^Q(f-\varepsilon_1)$. 	
	If $f(x)<\varepsilon_2$, then $J^Q(x,S,(f-\varepsilon_2)\vee 0)\geq 0=(f(x)-\varepsilon_2)\vee 0$. Hence, $S\in \Gc^Q(f-\varepsilon_2)$.
%
\end{proof}


\begin{Lemma}\label{lm.optimal.pseudo}
	Suppose Assumption \ref{assume.delta} holds. Given a bounded reward function $f\geq 0$ and a transition kernel $Q$, we have that
	\be\label{eq.continuef.optimalequi} 
	S^*((f-\varepsilon)\vee 0,Q)=S_*((f-\varepsilon)\vee 0,Q) \uparrow S_*(f,Q)=S^*(f,Q), \quad \text{as}\; \varepsilon \searrow 0,
	\ee 
	and
	\be\label{eq.continuef.value}  
	\lim_{\varepsilon \searrow 0}V^Q(x,(f-\varepsilon)\vee 0)=V^Q(x,f),\quad \forall x\in \X.
	\ee 
\end{Lemma}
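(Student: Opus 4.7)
The plan is to reduce the set equalities, via Proposition \ref{prop.optimalequi.pseudo}, to the single statement that $S_\varepsilon := S_*((f-\varepsilon)\vee 0, Q)$ increases to $S := S_*(f,Q) = S^*(f,Q)$ as $\varepsilon \searrow 0$. Monotonicity of the family $\{S_\varepsilon\}$ is already handed to us by \eqref{eq.fepsilon.optimalequi} in Lemma \ref{lm.f.epsilon}, so the limit $\tilde{S} := \bigcup_{\varepsilon > 0} S_\varepsilon$ exists and the goal becomes $\tilde{S} = S$. Throughout I write $f_\varepsilon := (f-\varepsilon)\vee 0$.

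For the inclusion $\tilde{S} \subseteq S$, I would show that $S$ is itself a pseudo equilibrium for $f_\varepsilon$. Indeed, for $x \notin S$, the equilibrium property $S \in \Ec^Q(f)$ gives $f(x) \leq \E_x^Q[\delta(\rho(S)) f(X_{\rho(S)})]$, and combining $f_\varepsilon \geq f - \varepsilon$ with $\delta(\rho(S)) \leq 1$ yields $\E_x^Q[\delta(\rho(S)) f_\varepsilon(X_{\rho(S)})] \geq \E_x^Q[\delta(\rho(S)) f(X_{\rho(S)})] - \varepsilon \geq f(x) - \varepsilon \geq f_\varepsilon(x)$, where the last step handles both $f(x) \geq \varepsilon$ and $f(x) < \varepsilon$. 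Hence $S \in \Gc^Q(f_\varepsilon)$ and by minimality $S_\varepsilon \subseteq S$, so $\tilde{S} \subseteq S$.

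For the reverse inclusion I would establish $\tilde{S} \in \Ec^Q(f)$; the minimality of $S^*(f,Q) = S$ then forces $S \subseteq \tilde{S}$. The pivotal observation is the pathwise stability $\rho(S_\varepsilon)(\omega) = \rho(\tilde{S})(\omega)$ for all $\varepsilon$ sufficiently small (depending on $\omega$): if $\rho(\tilde{S})(\omega) = \infty$, all $\rho(S_\varepsilon)(\omega) = \infty$ by $S_\varepsilon \subseteq \tilde{S}$; otherwise $X_{\rho(\tilde{S})}(\omega) \in \tilde{S} = \bigcup S_\varepsilon$ lies in some $S_{\varepsilon_0}$, and monotonicity gives equality for all $\varepsilon \leq \varepsilon_0$. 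Similarly, any $x \in \tilde{S}$ belongs to $S_\varepsilon$ for all sufficiently small $\varepsilon$. Since $S_\varepsilon = S^*(f_\varepsilon, Q) \in \Ec^Q(f_\varepsilon)$ by Lemma \ref{lm.iteration.sstar}, both inequalities in \eqref{eq.def.equilibrium} hold at $S_\varepsilon$ and $f_\varepsilon$. Letting $\varepsilon \searrow 0$ and invoking bounded convergence for $\delta(\rho(S_\varepsilon)) f_\varepsilon(X_{\rho(S_\varepsilon)}) \to \delta(\rho(\tilde{S})) f(X_{\rho(\tilde{S})})$ delivers both equilibrium inequalities for $\tilde{S}$ at $f$.

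The value convergence \eqref{eq.continuef.value} then drops out by cases. If $x \in S = \tilde{S}$, then eventually $x \in S_\varepsilon$, so $V^Q(x, f_\varepsilon) = f_\varepsilon(x) \to f(x) = V^Q(x, f)$; if $x \notin S$, then $x \notin S_\varepsilon$ for all $\varepsilon$, and the same bounded-convergence argument gives $J^Q(x, S_\varepsilon, f_\varepsilon) \to J^Q(x, S, f)$. The main obstacle I expect is verifying the stopping-region inequality $f(x) \geq \E_x^Q[\delta(\rho(\tilde{S})) f(X_{\rho(\tilde{S})})]$ for $x \in \tilde{S}$, since this is precisely the condition that pseudo equilibria drop; the way around it is to exploit that each $S_\varepsilon$ is a \emph{genuine} equilibrium for $f_\varepsilon$ (through Proposition \ref{prop.optimalequi.pseudo}), not just a pseudo one, so the stopping-region inequality at the $\varepsilon$ level is available to pass to the limit.
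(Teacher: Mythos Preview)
Your proposal is correct and follows essentially the same mechanics as the paper: monotonicity of $S_\varepsilon$ from Lemma~\ref{lm.f.epsilon}, pathwise stability of $\rho(S_\varepsilon)$, and dominated convergence. Two minor remarks on economy. First, the inclusion $\tilde{S}\subseteq S$ follows directly from \eqref{eq.fepsilon.optimalequi} with $\varepsilon_1=0$, so your separate verification that $S\in\Gc^Q(f_\varepsilon)$ is a re-derivation of that special case. Second, for the reverse inclusion the paper only checks the \emph{pseudo} equilibrium inequality $f(x)\le \E_x^Q[\delta(\rho(\tilde S))f(X_{\rho(\tilde S)})]$ for $x\notin\tilde S$, which already forces $S_*(f,Q)\subseteq\tilde S$ and hence $S=S^*(f,Q)=S_*(f,Q)\subseteq\tilde S$ via Proposition~\ref{prop.optimalequi.pseudo}; you instead establish the full equilibrium property $\tilde S\in\Ec^Q(f)$, including the stopping-region inequality for $x\in\tilde S$. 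That extra step is correctly handled (each $S_\varepsilon$ is a genuine equilibrium for $f_\varepsilon$, and you pass to the limit), but it is not needed for the lemma and is precisely the work that the pseudo-equilibrium framework was designed to bypass.
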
	

\begin{proof}
	
	As for \eqref{eq.continuef.optimalequi}, by Lemma \ref{lm.f.epsilon}, $S_*((f-\varepsilon)\vee 0,Q)$ increases as $\varepsilon\searrow0$, so 
	$$
	S':=\cup_{\varepsilon>0 } S_*((f-\varepsilon)\vee 0,Q)\subset S_*(f,Q).
	$$ 
	Given $x\notin S'$,
	\begin{align*}
		\E^x[\delta(\rho(S'))f(X_{\rho(S')})]=&\lim\limits_{\varepsilon\searrow 0}E^x[\delta(\rho(S_*((f-\varepsilon)\vee 0,Q)))((f(X_{\rho(S_*((f-\varepsilon)\vee 0,Q))})-\varepsilon)\vee 0)]\\
		=&\lim\limits_{\varepsilon\searrow 0}J^Q(x,S_*((f-\varepsilon)\vee 0,Q),(f-\varepsilon)\vee 0)
		\geq \lim\limits_{\varepsilon\searrow 0} (f(x)-\varepsilon)\vee 0\\
		=&f(x),
	\end{align*}
	where the second line follows that $x\notin S_*((f-\varepsilon)\vee 0,Q)$. Hence, $S'\in \Gc^Q(f)$ and $S_*(f,Q)\subset S'$, which implies $S'=S_*(Q,f)$. Then by Proposition \ref{prop.optimalequi.pseudo}, 
	$$
	S^*((f-\varepsilon)\vee 0,Q) =S_*((f-\varepsilon)\vee 0,Q) \uparrow S_*(f,Q)=S^*(f,Q) ,\quad \text{as}\; \varepsilon \searrow 0.
	$$
	
	Now we prove \eqref{eq.continuef.value}. By \eqref{eq.continuef.optimalequi}, for $x\in S^*(f,Q)$, $x\in 	S^*((f-\varepsilon)\vee 0,Q)$ for $\varepsilon$ small enough, and thus
	$$\lim_{\varepsilon \searrow 0}V^Q(x,(f-\varepsilon)\vee 0)=\lim_{\varepsilon \searrow 0}(f(x)-\varepsilon)\vee 0=f(x)=V^Q(x,f),\quad \forall x\in S_*(f,Q).$$
	For $x\notin S_*(f,Q)$, by  \eqref{eq.continuef.optimalequi}, $\rho(S^*(f-\varepsilon)\vee 0,Q)\rightarrow \rho(S^*(f,Q))$ a.s. and $(f-\varepsilon)\vee 0\rightarrow f$ as $\varepsilon\searrow 0$. Then by Dominated Convergence Theorem,
	\begin{align*}
		\lim\limits_{\varepsilon\searrow 0}V^Q(x,(f-\varepsilon)\vee 0)=&\lim\limits_{\varepsilon\searrow 0} E^x[\delta(\rho(S^*((f-\varepsilon)\vee 0,Q)))((f(X_{\rho(S^*((f-\varepsilon)\vee 0,Q))})-\varepsilon)\vee 0)]\\
		=& \E^x[\delta(\rho(S^*(f,Q)))f(X_{\rho(S^*(f,Q))})]=V^Q(x,f),\quad x\notin S_*(f,Q).
	\end{align*}
	which completes the proof of \eqref{eq.continuef.value}.
\end{proof}

\begin{Lemma}\label{lm.equi.epsilon}
	Suppose Assumption \ref{assume.delta} holds. Let $f\geq 0$ be a bounded reward function and $Q$ be a transition kernel. Then for any $\varepsilon>0$, we have that
	$$
	\Gc^Q(f)\subset  \Gc^Q((f-\varepsilon)\vee 0)\subset \Gc^Q(f,\varepsilon)\subset \Gc^Q\left(\left(f-\frac{\varepsilon}{1-\delta(1)}\right)\vee 0\right).
	$$
\end{Lemma}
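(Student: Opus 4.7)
The statement contains three inclusions, which I will handle in order from easiest to hardest.

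\textbf{First inclusion.} The inclusion $\Gc^Q(f)\subset\Gc^Q((f-\varepsilon)\vee 0)$ is immediate from Lemma \ref{lm.f.epsilon} applied with $\varepsilon_1=0$ and $\varepsilon_2=\varepsilon$, so nothing new is needed.

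\textbf{Second inclusion.} For $S\in\Gc^Q((f-\varepsilon)\vee 0)$ and any $x\notin S$, the defining inequality \eqref{eq.defem.outS} (with reward $(f-\varepsilon)\vee 0$) reads
$$ (f(x)-\varepsilon)\vee 0 \;\le\; \E^Q_x\bigl[\delta(\rho(S))\,((f(X_{\rho(S)})-\varepsilon)\vee 0)\bigr].$$
Since $f(x)-\varepsilon\le (f(x)-\varepsilon)\vee 0$ and $(f(y)-\varepsilon)\vee 0\le f(y)$ for every $y$, chaining these two trivial estimates yields $f(x)\le \E^Q_x[\delta(\rho(S))f(X_{\rho(S)})]+\varepsilon$, i.e.\ $S\in\Gc^Q(f,\varepsilon)$.

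\textbf{Third inclusion — the main step.} Let $S\in\Gc^Q(f,\varepsilon)$ and fix $x\notin S$; write $\tilde f:=\left(f-\frac{\varepsilon}{1-\delta(1)}\right)\vee 0$. The key observation is that because $\rho(S)\ge 1$ by definition and $\delta$ is decreasing, we have
$$\E^Q_x[\delta(\rho(S))]\;\le\;\delta(1),\qquad\text{hence}\qquad 1-\E^Q_x[\delta(\rho(S))]\;\ge\;1-\delta(1).$$
Multiplying by $\frac{\varepsilon}{1-\delta(1)}$ gives $\frac{\varepsilon}{1-\delta(1)}\bigl(1-\E^Q_x[\delta(\rho(S))]\bigr)\ge\varepsilon$, so the defining inequality $f(x)\le\E^Q_x[\delta(\rho(S))f(X_{\rho(S)})]+\varepsilon$ can be strengthened to
$$ f(x)\;\le\;\E^Q_x[\delta(\rho(S))f(X_{\rho(S)})] + \tfrac{\varepsilon}{1-\delta(1)}\bigl(1-\E^Q_x[\delta(\rho(S))]\bigr).$$
Rearranging,
$$ f(x)-\tfrac{\varepsilon}{1-\delta(1)}\;\le\;\E^Q_x\!\left[\delta(\rho(S))\!\left(f(X_{\rho(S)})-\tfrac{\varepsilon}{1-\delta(1)}\right)\right]\;\le\;\E^Q_x[\delta(\rho(S))\,\tilde f(X_{\rho(S)})],$$
where the last inequality uses $\tilde f\ge f-\frac{\varepsilon}{1-\delta(1)}$ together with $\delta(\rho(S))\ge 0$. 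Since the right-hand side is non-negative, it also dominates $0$, and hence dominates $\tilde f(x)=\bigl(f(x)-\tfrac{\varepsilon}{1-\delta(1)}\bigr)\vee 0$. This is exactly \eqref{eq.defem.outS} for $\tilde f$, so $S\in\Gc^Q(\tilde f)$.

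The only delicate point is the third inclusion, where one must know where the constant $\frac{\varepsilon}{1-\delta(1)}$ comes from; it is engineered precisely so that the ``gap'' $\frac{\varepsilon}{1-\delta(1)}(1-\E^Q_x[\delta(\rho(S))])$ absorbs the extra $\varepsilon$ in the definition of a pseudo $\varepsilon$-equilibrium, using only the universal bound $\rho(S)\ge 1$. Handling the truncation at $0$ is cosmetic because the stronger ungapped inequality is already non-negative on the right-hand side.
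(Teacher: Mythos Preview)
Your proof is correct and follows essentially the same approach as the paper: both arguments hinge on the bound $\E^Q_x[\delta(\rho(S))]\le\delta(1)$ (from $\rho(S)\ge 1$ and $\delta$ decreasing) to convert the $\varepsilon$-slack into a $\frac{\varepsilon}{1-\delta(1)}$-shift of the reward. The only cosmetic difference is that the paper splits the third inclusion into the cases $f(x)\ge\frac{\varepsilon}{1-\delta(1)}$ and $f(x)<\frac{\varepsilon}{1-\delta(1)}$, whereas you handle both at once by noting that the right-hand side $\E^Q_x[\delta(\rho(S))\tilde f(X_{\rho(S)})]$ is non-negative and therefore dominates the maximum with $0$; this is a mild streamlining, not a different idea.
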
 

\begin{proof}
	$\Gc^Q(f)\subset \Gc^Q((f-\varepsilon)\vee 0)$ follows Lemma \ref{lm.f.epsilon}.
	
	Let $S\in \Gc^Q((f-\varepsilon)\vee 0)$. For any $x\notin S$, if $f(x)\geq \varepsilon$, then
	$$
	\E_x^Q[\delta(\rho(S)) f(X_{\rho(S)})]\geq \E_x^Q[\delta(\rho(S)) ((f(X_{\rho(S)})-\varepsilon)\vee 0)]\geq (f(x)-\varepsilon)\vee 0=f(x)-\varepsilon.
	$$
	If $f(x)<\varepsilon$, obviously, $	\E_x^Q[\delta(\rho(S)) f(X_{\rho(S)})]\geq 0>f(x)-\varepsilon$. So $S\in \Gc^Q(f,\varepsilon)$. 
	
	Let $S\in \Gc^Q(f,\varepsilon)$. Take $x\notin S$. If $f(x)\geq \frac{\varepsilon}{1-\delta(1)}$, then by $\rho(S)\geq 1$ we have that
	\begin{align*}
		\E_x^Q\left[\delta(\rho(S)) \left(\left(f(X_{\rho(S)})-\frac{\varepsilon}{1-\delta(1)}\right)\vee 0\right)\right]\geq &\E_x^Q[\delta(\rho(S)) f(X_{\rho(S)})]-\delta(1)\cdot \frac{\varepsilon}{1-\delta(1)}\\
		\geq  & f(x)-\varepsilon-\frac{\delta(1)\varepsilon}{1-\delta(1)}
		= f(x)-\frac{\varepsilon}{1-\delta(1)},
	\end{align*}
	where the second line follows from $S\in \Gc^Q(f,\varepsilon)$. If $f(x)<\frac{\varepsilon}{1-\delta(1)}$, then $$
	\E_x^Q\left[\delta(\rho(S)) \left(\left(f(X_{\rho(S)})-\frac{\varepsilon}{1-\delta(1)}\right)\vee 0\right)\right]\geq0=\left(f(x)-\frac{\varepsilon}{1-\delta(1)}\right)\vee 0.
	$$
	Hence, $S\in \Gc^Q((f-\frac{\varepsilon}{1-\delta(1)})\vee 0)$.
	
	
\end{proof}
\begin{proof}[{\bf \textit{Proof of Theorem \ref{thm.Qf.continuity}}}]
The proof is a combination of the following two steps.

Step 1.
We first prove, under assumptions in Theorem \ref{thm.Qf.continuity}, that
\be\label{eq.prop.liminf} 
	V^{Q^\infty}(x,f^\infty)\leq \liminf_{n\rightarrow \infty} V^{Q^n}_\varepsilon(x,f^n)\leq   \liminf_{n\rightarrow \infty} W^{Q^n}_\varepsilon(x,f^n), \quad \forall \varepsilon>0.
\ee	
		Let $\varepsilon>0$. Applying Lemma \ref{lm.tau.uniform}(c) with $\tau=\rho(S^*(f^\infty,Q^\infty))$, there exists $N\in \N$ such that 
		$$
\sup_{x\in \X} |v^{Q^n}(x, \rho(S^*(f^\infty,Q^\infty)), f^n)-v^{Q^\infty}(x, \rho(S^*(f^\infty,Q^\infty)), f^\infty)|\leq\varepsilon.
	$$
	Then
\begin{align*}
	v^{Q^n}(x, \rho(S^*(f^\infty,Q^\infty)), f^n)&\geq v^{Q^\infty}(x, \rho(S^*(f^\infty,Q^\infty)), f^\infty)-\varepsilon\geq f(x)-\varepsilon,\quad \forall x\notin S^*(f^\infty,Q^\infty),\\
v^{Q^n}(x, \rho(S^*(f^\infty,Q^\infty)), f^n)&\leq v^{Q^\infty}(x, \rho(S^*(f^\infty,Q^\infty)), f^\infty)+\varepsilon\leq f(x)+\varepsilon,\quad \forall x\in S^*(f^\infty,Q^\infty).
\end{align*}
	Hence, $S^*(f^\infty,Q^\infty)\in \Ec^{Q^n}_\varepsilon(f^n)$ for all $n\geq N$.
	
Now take $x\in \X$. For $n\geq N$, by Definition \ref{def.equi.pseudoepsi} and \eqref{eq.def.WVepsilon},
	$$
	V^{Q^n}_\varepsilon(x,f^n)\geq J^{Q^n}(x,S^*(f^\infty,Q^\infty),f^n),
	$$
	which leads to
$$
 \liminf_{n\rightarrow \infty} V^{Q^n}_\varepsilon(x,f^n)\geq \liminf_{n\rightarrow \infty} J^{Q_n}(x,S^*(f^\infty,Q^\infty),f^n) =V^{Q^\infty}(x,f^\infty),
$$
	where the second (in)equality follows from Lemma \ref{lm.tau.uniform}(a).
	By Lemma \ref{lm.equi.pseduvs}, $W^{Q^n}_\varepsilon(x,f^n)\geq V^{Q^n}_\varepsilon(x,f^n)$, and Step 1 is completed.

Step 2. Now we show, under the same assumptions in Theorem \ref{t1} (which are weaker than the assumptions in Theorem \ref{thm.Qf.continuity}), that
	\be\label{eq.continue.upper}  
	\lim\limits_{\varepsilon\searrow 0} \left(\limsup_{n\rightarrow \infty} V^{Q^n}_\varepsilon(x,f^n)\right) \leq  	\lim\limits_{\varepsilon\searrow 0} \left(\limsup_{n\rightarrow \infty} W^{Q^n}_\varepsilon(x,f^n)\right)\leq V^{Q^\infty}(x,f^\infty),\quad \forall x\in \X.
	\ee 
	By  Theorem \ref{t1} and Proposition \ref{prop.optimalequi.pseudo}, for any $\varepsilon\geq 0$,
	\be\label{eq0} 
	\begin{aligned}
\limsup_{n\rightarrow \infty}V^{Q^n}\left(x,\left(f^n-\frac{\varepsilon}{1-\delta(1)}\right)\vee 0\right)=& \limsup_{n\rightarrow \infty}W^{Q^n}\left(x,\left(f^n-\frac{\varepsilon}{1-\delta(1)}\right)\vee 0\right)\\
	\leq & V^{Q^\infty}\left(x,\left(f^\infty-\frac{\varepsilon}{1-\delta(1)}\right)\vee 0\right),\quad \forall x\in \X.
	\end{aligned}
	\ee 
	Meanwhile, for $n\in\overline \N$, 
	\be\label{eq1} 
	\begin{aligned}
		V^{Q^n}_\varepsilon(x,f^n)\leq & W^{Q^n}_\varepsilon(x,f^n)\\
		\leq & W^{Q^n}\left(x,\left(f^n-\frac{\varepsilon}{1-\delta(1)}\right)\vee 0\right)+\frac{\varepsilon}{1-\delta(1)}\\
		=&V^{Q^n}\left(x,\left(f^n-\frac{\varepsilon}{1-\delta(1)}\right)\vee 0\right)+\frac{\varepsilon}{1-\delta(1)},\quad \forall x\in \X.
	\end{aligned}
	\ee 
	where the first line follows from Lemma \ref{lm.equi.pseduvs}, the second line follows from $\Gc^{Q^n}(f^n,\varepsilon)\subset \Gc^{Q_n}((f^n-\frac{\varepsilon}{1-\delta(1)})\vee 0)$ implied by Lemma \ref{lm.equi.epsilon}, and the last line follows from Proposition \ref{prop.optimalequi.pseudo}.
	By \eqref{eq0} and \eqref{eq1}, for any $\varepsilon\geq 0$ and $x\in\X$,
\begin{align*}
\limsup_{n\rightarrow \infty} V^{Q^n}_\varepsilon(x,f^n)\leq\limsup_{n\rightarrow \infty} W^{Q^n}_\varepsilon(x,f^n)\leq & \limsup\limits_{n\rightarrow \infty} V^{Q^n}\left(x,\left(f^n-\frac{\varepsilon}{1-\delta(1)}\right)\vee 0\right)+\frac{\varepsilon}{1-\delta(1)}\\
	 \leq & V^{Q^\infty}\left(x,\left(f^\infty-\frac{\varepsilon}{1-\delta(1)}\right)\vee 0\right)+\frac{\varepsilon}{1-\delta(1)},
\end{align*}
	Then \eqref{eq.continue.upper} follows by setting $Q=Q^\infty$ in \eqref{eq.continuef.value}.

\end{proof}

\begin{proof}[{\bf \textit{Proof of Proposition \ref{prop.continue.pseudo}}}]
	Step 1. Let $\eps>0$. We first prove that for any $x\in \X\setminus S^*(f^\infty,Q^\infty)$, there exists a set $S_x$ and $N\in\N$ such that $$
	S_x\in \Gc^{Q^n}(f^n,\varepsilon),\quad
	J^{Q^n}(x,S^*(f^\infty,Q^\infty),f^n)\leq J^{Q^n}(x,S_x,f^n)+\varepsilon\quad\text{and}  \quad \forall n\geq N.
	$$
	Fix $x\notin S^*(f^\infty,Q^\infty)$. As $\sup_{n\in \overline \N} \|f^n\|_{\infty}=:M<\infty$, we can take $T\in \N$ such that $\delta(T)M<\varepsilon/2$. Then we apply the same discussion as \eqref{eq.n.T} to find a compact set $K$ and $N_1\in\N$ (that may depend on $x$) such that 
\be\label{eq.local.thm}
	2M\left(1-\P^{Q^n}_x(X_t\in K,\ t=0,\dotso,T)\right)=2M\cdot \P^{Q^n}_x(\rho(\X\setminus K)\leq T)<\varepsilon/2, \quad \forall N_1\leq n\leq \infty.
\ee
	 By Lemma \ref{lm.tau.uniform}(b),
	$$
\lim_{n\rightarrow\infty } \sup_{y\in (K\setminus S^*(f^\infty, Q^\infty))}|J^{Q^\infty}(y,S^*(f^\infty, Q^\infty),f^\infty)-J^{Q^n}(y,S^*(f^\infty, Q^\infty),f^n)|=0.
	$$
	This together with the locally uniform convergence of $(f^n)_{n\in \N}$, we can find $N_2\in \N$ (that may depend on $x$) such that, for all $n\geq N_2$, $\sup_{y\in K}|f^n(y)-f^\infty(y)|<\frac{\varepsilon}{2}$ and
	$$
J^{Q^\infty}(y,S^*(f^\infty, Q^\infty))-\frac{\varepsilon}{2}\leq J^{Q^n}(y,S^*(f^\infty,Q^\infty),f^\infty),\quad \forall y\in (K\setminus S^*(f^\infty, Q^\infty)).
$$
This imply that for all $n\geq N_2$,
	\begin{align}
\notag		f^n(y)-\varepsilon \leq & f^\infty(y)-\frac{\varepsilon}{2}\leq J^{Q^\infty}(y,S^*(f^\infty,Q^\infty),f^\infty)-\frac{\varepsilon}{2}\\
\label{e003}		\leq &J^{Q^n}(y,S^*(f^\infty,Q^\infty),f^n),\quad \forall y\in (K\setminus S^*(f^\infty, Q^\infty)).
	\end{align}
Let 
$$S_x:=S^*(f^\infty,Q^\infty)\cup (\X\setminus K).$$
By \eqref{e003}, $S_x\in \Gc^{Q^n}_\varepsilon(f^n)$ for $n\geq N_2$. Moreover, for any $n\geq N:=N_1\vee N_2$,
	\begin{align*}
		&|J^{Q^n}(x,S^*(f^\infty,Q^\infty),f^n)-J^{Q^n}(x,S_x,f^n)|\\
		\leq & \E^{Q^n}_x[|\delta(\rho(S^*(f^\infty,Q^\infty)))f^n(X_{\rho(S^*(f^\infty,Q^\infty))})-\delta(\rho(S_x))f^n(X_{\rho(S_x)})|\cdot 1_{\{X_{\rho(S_x)}\notin S^*(f^\infty,Q^\infty), \rho(S_x)\geq T\}}]\\
		&+\E^{Q^n}_x[|\delta(\rho(S^*(f^\infty,Q^\infty)))f^n(X_{\rho(S^*(f^\infty,Q^\infty))})-\delta(\rho(S_x))f^n(X_{\rho(S_x)})|\cdot 1_{\{X_{\rho(S_x)}\notin S^*(f^\infty,Q^\infty), \rho(S_x)< T\}}]\\	
		\leq & 2M \delta(T)+2M\cdot \P^{Q^n}_x(\rho(\X\setminus K)\leq T)\\
		<&\varepsilon,
	\end{align*}
where the last line follows from \eqref{eq.local.thm} and $\delta(T)M<\varepsilon/2$. Step 1 is completed.
	
	Step 2. For any $x\notin S^*(f^\infty,Q^\infty)$, we can find $N'\in\N$ (which may depend on $x$) such that $$
	| J^{Q^\infty}(x, S^*(f^\infty,Q^\infty), f^\infty)- J^{Q^n}(x, S^*(f^\infty,Q^\infty), f^\infty)|<\frac{\varepsilon}{2},\quad \forall n\geq N'.
	$$
	 Then from Step 1, 
	\begin{align*}
		V^{Q^\infty}(x,f^\infty)= & J^{Q^\infty}(x, S^*(f^\infty,Q^\infty), f^\infty)\leq J^{Q^n}(x, S^*(f^\infty,Q^\infty), f^n)+\varepsilon\\
		\leq &J^{Q^n}(x, S_x,f^n)+2\varepsilon\leq W^{Q^n}_\varepsilon(f^n)+2\varepsilon,\quad \forall n\geq N\vee N'.
	\end{align*}
	Letting $n\to \infty$ then $\varepsilon\searrow 0$, we have that
	$$
		V^{Q^\infty}(x,f^\infty)\leq \lim_{\eps\searrow 0}\left(\liminf_{n\to \infty} W^{Q^n}_\eps (x,f^n)\right),\quad \forall x\in \X.
	$$ 
Then the rest follows from Step 2 in the proof of Theorem \ref{thm.Qf.continuity}.
\end{proof}

\appendix
\section{Proofs of the lemmata in Section \ref{sec:set.up}}\label{appendix}
\begin{proof}[Proof of Lemma \ref{lm.iteration.sstar}]
Set $S_\infty:=\cup_{k\in \N} S_k$. One  can easily check that same arguments for $S_\infty$ in the proof of Theorem 2 in \cite{MR4205889} is applicable for $S_\infty$.\footnote{The process $X$ is a continuous-time Markov chain in \cite{MR4205889}, while in this paper $X$ is a discrete-time Markov process.} More specifically, Lemmas 2.3, 2.4, 2.5, and the contradiction discussion in the first part of the proof for Theorem 2.2 in \cite{MR4205889} can be applied, and one can obtain an inequality similar as that in \cite[Theorem 2.2]{MR4205889} as follows:
\begin{align*}
	J^Q(y,R,f)-J^Q(y_\infty,S^*(f,Q),f)\leq  \E_y^Q[\delta(\rho(R))]\alpha\leq \delta(1)\alpha<\alpha,
\end{align*}
where the first inequality appears in the proof of \cite[Theorem 2.2]{MR4205889}, and the second inequality follows our time discrete setting. Hence, the same contradiction is reached as that in first part of the proof for  \cite[Theorem 2.2]{MR4205889}, and we have the following:

(i) $S_\infty\subset R,\quad \forall R\in \Ec^Q(f)$;

(ii) For any $S\in \Ec^Q(f)$ and $T\in\Bc$ with $S\subset T$, 
$$
J^Q(x, S,f)\geq J^Q(x,T,f),\quad \forall x\in \X.
$$

(iii) $S_\infty$ is an equilibrium.

By (i) and (iii), $S_\infty=\cap_{S\in \Ec^Q(f)}S=S^*(Q,f)$. Then (ii) implies that $J^Q(x, S^*(Q,f),f)\geq J^Q(x,S,f)$ for any $S\in \Ec^Q(f)$. As a result, $S^*(Q,f)$ is an optimal equilibrium and $V^Q(x,f)=J^Q(x, S^*(Q,f),f)$.
\end{proof}

\begin{proof}[Proof of Lemma \ref{l2}] 
Denote
$$Q_T^n(x,\cdot):=Q^n(x,dx_1)\otimes Q^n(x_1,dx_2)\dotso\otimes Q^n(x_{k-1},dx_k),\quad x\in\X, n\in\overline\N.$$
	Part (a):
Let $\eps>0$. For any $x\in \X$ and compact set $K_0\subset \X$ we have that
\begin{align}
	\notag&Q^n_T(x, (K_0)^T)=\int_{K_0} Q^n(x,dx_1)\int_{K_0} Q^n(x_1,dx_2)\dotso \int_{{K_0}} Q^n(x_{T-1},dx_T)\\
	\notag\geq & \int_{K_0} Q^n(x,dx_1)\dotso \int_{K_0} Q^n(x_{T-2},dx_{T-1})\int_{K_0} Q^\infty(x_{T-1},dx_T)-\sup_{y\in {K_0}}||Q^n(y,.)-Q^\infty(y,.)||_\text{TV}\\
	\notag\geq & \int_{K_0} Q^n(x,dx_1)\dotso\int_{K_0} Q^n(x_{T-3},dx_{T-2}) \int_{K_0} Q^\infty(x_{T-2},dx_{T-1})\int_{K_0} Q^\infty(x_{T-1},dx_T)\\
	\notag&-2\sup_{y\in {K_0}}||Q^n(y,.)-Q^\infty(y,.)||_\text{TV}\\
	\notag\dotso &\\
	\notag\geq &\int_{K_0} Q^\infty(x,dx_1)\dotso\int_{K_0} Q^\infty(x_{T-1},dx_T)-T\sup_{y\in {K_0}}||Q^n(y,.)-Q^\infty(y,.)||_\text{TV}\\
	\label{e2}=&Q^\infty_T(x,(K_0)^T)-T\sup_{y\in {K_0}}||Q^n(y,.)-Q^\infty(y,.)||_\text{TV}.
\end{align}
Exchanging $Q^\infty, Q^n$ in the above inequality and combining with \eqref{e2}, we have
\be\label{eq.tv.k} 
|Q^n_T(x, (K_0)^T)-Q^\infty_T(x, (K_0)^T)|\leq T\sup_{y\in {K_0}}||Q^n(y,.)-Q^\infty(y,.)||_\text{TV},\quad \forall x\in \X.
\ee 

There exists compact subset $K'$ (that may depend on $x$) such that
\be\label{eq.k.inftylocal} 
Q^\infty_T(x,(K')^T)\geq 1-\eps/2.
\ee
By \eqref{eq.tv.k} with $K_0=K'$, there exists $N\in\N$ (that may depend on $K'$) such that 
\be\label{eq.local.uniform}
T\cdot \sup_{y\in K'}||Q^n(y,.)-Q^\infty(y,.)||_\text{TV}\leq \eps/2,\quad \forall n\geq N, 
\ee
This together with \eqref{eq.k.inftylocal} implies that
\be\label{eq.n.T} 
Q^n_T(x,(K')^T)\geq 1-\eps,\quad \forall N\leq n\leq \infty.
\ee
Hence, for any $g\in B(\X^T;[0,1])$, 
\be\label{eq.K.T} 
\left|\E^{Q^n}_x\left[g(X_1,X_2,\dotso,X_T)\right]-\E_x^{Q^n}\left[g(X_1,X_2,\dotso,X_T)\cdot 1_{\{X_t\in K',t=1,\dotso,T\}}\right]\right|\leq\eps,\quad\forall n\geq N.
\ee 
Using a similar argument as that for \eqref{eq.tv.k}, we can show that for any compact set $K_0\subset \X$,
\be\label{eq.g.K} 
\begin{aligned}
	&\left|\E^{Q^n}_x\left[g(X_1,X_2,\dotso,X_T)\cdot 1_{\{X_t\in {K_0},t=1,\dotso,T\}}\right]-\E^{Q^\infty}_x\left[g(X_1,X_2,\dotso,X_T)\cdot 1_{\{X_t\in {K_0},t=1,\dotso,T\}}\right]\right|\\
	\leq & T\sup_{y\in {K_0}}||Q^n(y,.)-Q^\infty(y,.)||_\text{TV},\quad \forall x\in \X.
\end{aligned}
\ee
By \eqref{eq.g.K} with $K_0=K'$ and \eqref{eq.K.T}, 
\be\label{eq.local.gtv} 
\begin{aligned}
	&\sup_{g\in B(\X^T;[0,1])} |\E^{Q^n}_xg(X_1,X_2,\dotso,X_T)-\E^{Q^\infty}_x g(X_1,X_2,\dotso,X_T)|\\
	&\leq 2\eps+T\sup_{y\in K'}||Q^n(y,.)-Q^\infty(y,.)||_\text{TV},\quad \forall n\geq N.
\end{aligned}
\ee
Then the result follows by sending $n\to\infty$ and then $\eps\to 0$.

Part (b): For any $x\in K$, the same discussion from \eqref{eq.local.uniform} to \eqref{eq.g.K} can be applied. Notice that now the compact set $K'$ in \eqref{eq.k.inftylocal} does not depend on $x$ and the integer $N$ in \eqref{eq.local.uniform} only depends on $K'$. Hence, \eqref{eq.local.gtv} is now rewritten as
\begin{align*}
	&\sup_{x\in K, g\in B(\X^T;[0,1])} |\E^{Q^n}_xg(X_1,X_2,\dotso,X_T)-\E^{Q^\infty}_x g(X_1,X_2,\dotso,X_T)|\\
	\leq &2\eps+T\sup_{y\in K'}||Q^n(y,.)-Q^\infty(y,.)||_\text{TV}, \quad \forall n\geq N.
\end{align*}

Part (c):  The same argument from part (a) can be applied and in this case $N$ is independent of $x$. Then we can extend \eqref{eq.local.gtv} to
\begin{align*}
	&\sup_{x\in \X, g\in B(\X^T;[0,1])} |\E^{Q^n}_xg(X_1,X_2,\dotso,X_T)-\E^{Q^\infty}_x g(X_1,X_2,\dotso,X_T)|\\
	\leq & 2\eps+ T\sup_{y\in \X}||Q^n(y,.)-Q^\infty(y,.)||_\text{TV},\quad\forall\,n\geq N.
\end{align*}
\end{proof}

\begin{proof}[Proof of Lemma \ref{lm.tau.uniform}]
	Part (a):	Let $\eps>0$. As $M:=\sup_{n\in \overline \N} \|f^n\|_{\infty}<\infty$, there exists $T\in\N$ such that
\be\label{eq.lm.2.3.T}
\sup_{x\in \X, n\in\overline\N, \tau\in \T}\left|v^{Q^n}(x,\tau,f^n)-\E_x^{Q^n}\left[\delta(\tau)f^n(X_\tau)1_{\{\tau\leq T\}}\right]\right|<\eps/4.
\ee
Take $x\in \X$. By Lemma \ref{l2}(a), there exists $N\in \N$ (that may depend on $x$) such that
\be\label{eq.lm2.3.local0} 
\sup_{m\in \overline \N, \tau\in\T}\left|\E_x^{Q^n}\left[\delta(\tau)f^m(X_\tau)1_{\{\tau\leq T\}}\right]-\E_x^{Q^\infty}\left[\delta(\tau)f^m(X_\tau)1_{\{\tau\leq T\}}\right]\right|\leq\eps/4,\quad \forall n\geq N.
\ee
By the locally uniform convergence of $(f^n)_{n\in \overline \N}$, we can first choose a compact set $K'$ (that may depend on $x$) then choose $N'\in\N$ (that may depend on $K'$) such that 
$$
Q^\infty_T(x, (K')^T)\geq 1-\frac{\varepsilon}{16M}\quad\text{and}\quad
\sup_{y\in K'}|f^n(y)-f^\infty(y)|\leq \frac{\varepsilon}{8},\ \forall n\geq N'.
$$
Then 
\be\label{eq.lm2.3.local1}  
\begin{aligned}
	&\sup_{\tau\in\T}\left|\E_x^{Q^\infty}\left[\delta(\tau)f^n(X_\tau)1_{\{\tau\leq T\}}\right]-\E_x^{Q^\infty}\left[\delta(\tau)f^\infty(X_\tau)1_{\{\tau\leq T\}}\right]\right|\\
	\leq & \sup_{\tau\in\T}\left|\E_x^{Q^\infty}\left[\delta(\tau)f^n(X_\tau)1_{\{\tau\leq T\ \text{and}\; X_t\in K', 1\leq t\leq T\}}\right]-\E_x^{Q^\infty}\left[\delta(\tau)f^\infty(X_\tau)1_{\{\tau\leq T\ \text{and}\; X_t\in K', 1\leq t\leq T\}}\right]\right|\\
	&+2\cdot M\cdot \frac{\eps}{16M}\\
	\leq &\sup_{y\in K'}|f^n(y)-f^\infty(y)|+\frac{\eps}{8}\leq \frac{\varepsilon}{4},\quad \forall n\geq N'.
\end{aligned}
\ee
Therefore, by \eqref{eq.lm.2.3.T}-\eqref{eq.lm2.3.local1}, for all $n\geq N\vee N'$,
\be\label{eq.lm2.3.local}  
\begin{aligned}
\sup_{\tau\in\T}|v^{Q^n}(x,\tau,f^n)-v^{Q^\infty}(x,\tau,f^\infty)|\leq  &\sup_{\tau\in \T}\left|v^{Q^n}(x,\tau,f^n)-\E_x^{Q^n}\left[\delta(\tau)f^n(X_\tau)1_{\{\tau\leq T\}}\right]\right|   \\
	&+ 
	\sup_{\tau\in\T}\left|\E_x^{Q^n}\left[\delta(\tau)f^n(X_\tau)1_{\{\tau\leq T\}}\right]-\E_x^{Q^\infty}\left[\delta(\tau)f^n(X_\tau)1_{\{\tau\leq T\}}\right]\right|\\
	&+\sup_{\tau\in\T}\left|\E_x^{Q^\infty}\left[\delta(\tau)f^n(X_\tau)1_{\{\tau\leq T\}}\right]-\E_x^{Q^\infty}\left[\delta(\tau)f^\infty(X_\tau)1_{\{\tau\leq T\}}\right]\right|\\
	&+\sup_{\tau\in \T}\left|v^{Q^\infty}(x,\tau,f^\infty)-\E_x^{Q^\infty}\left[\delta(\tau)f^\infty(X_\tau)1_{\{\tau\leq T\}}\right]\right| \\
	\leq & \eps.
\end{aligned}
\ee 

Part (b): Fix a compact set $K$. By Lemma \ref{l2}(b), we can apply the steps through \eqref{eq.lm2.3.local0}---\eqref{eq.lm2.3.local} by replacing all $\sup_{\tau\in \T}$ (respectively, $\sup_{m\in \overline \N, \tau\in \T}$) with $\sup_{x\in K,\tau\in \T}$ (respectively, $\sup_{x\in K, m\in \overline \N, \tau\in \T}$). Notice that, by assumption on $Q^\infty$, the constants $N, K'$ in this case only depend on $K$ instead of $x$. Hence, the result follows.

Part (c): 
By Lemma \ref{l2}(c), there exists $N>0$ such that
\be\label{eq.lm2.3.unif0} 
\sup_{x\in \X, m\in \overline \N, \tau\in\T}\left|\E_x^{Q^n}\left[\delta(\tau)f^m(X_\tau)1_{\{\tau\leq T\}}\right]-\E_x^{Q^\infty}\left[\delta(\tau)f^m(X_\tau)1_{\{\tau\leq T\}}\right]\right|\leq\eps/4,\quad \forall n\geq N.
\ee
In addition, choose $N'\in\N$ such that $\|f^n-f^\infty\|_{\infty}<\frac{\varepsilon}{4}$ for any $n\geq N'$, 
Then 
\be\label{eq.lm2.3.unif1} 
\sup_{x\in \X, \tau\in\T}\left|\E_x^{Q^\infty}\left[\delta(\tau)f^n(X_\tau)1_{\{\tau\leq T\}}\right]-\E_x^{Q^\infty}\left[\delta(\tau)f^\infty(X_\tau)1_{\{\tau\leq T\}}\right]\right|\leq \|f^n-f^\infty\|_{\infty}<\frac{\varepsilon}{4},\quad \forall n\geq N'.
\ee
Combining \eqref{eq.lm.2.3.T}, \eqref{eq.lm2.3.unif0} and \eqref{eq.lm2.3.unif1}, and replacing ``$\sup_{\tau\in \T}$" with ``$\sup_{x\in \X,\tau\in\T}$" in \eqref{eq.lm2.3.local}, we achieve the desired result.
\end{proof}

\bibliographystyle{plain}
\bibliography{reference}

\end{document}